\newcommand{\N}{\mathbb{N}}
\newcommand{\Z}{\mathbb{Z}}
\newcommand{\Q}{\mathbb{Q}}
\newcommand{\R}{\mathbb{R}}
\newcommand{\C}{\mathbb{C}}
\newcommand{\A}{\mathbb{A}}
\newcommand{\tr}{\mathrm{tr}\,}
\renewcommand{\S}{\mathcal{S}}
\newcommand{\SL}{\mathrm{SL}}
\newcommand{\GL}{\mathrm{GL}}
\newcommand{\Sp}{\mathrm{Sp}}
\newcommand{\GSp}{\mathrm{GSp}} 
\newcommand{\G}{\Gamma}
\newcommand{\disc}{\mathrm{disc}\,}
\newcommand{\cont}{\mathrm{cont}\,}
\newcommand{\ord}{{\mathrm ord}}
\newcommand{\Cl}{\mathrm{Cl}}
\renewcommand{\o}{{\mathfrak{o}}}
\newcommand{\p}{\mathfrak{p}}
\renewcommand{\a}{\alpha}
\renewcommand{\b}{\beta}
\newcommand{\triv}{1}
\newcommand{\ti}{^{\times}}
\newcommand{\back}{\backslash}
\renewcommand{\(}{\left(} \renewcommand{\)}{\right)}
\renewcommand{\[}{\left[} 
\newcommand{\mat}[4]{{\setlength{\arraycolsep}{0.5mm}\left( \begin{array}{cc}#1&#2\\#3&#4\end{array}\right)}} 
\newcommand{\T}[1]{\,{}^t\! {{#1}}} 
\newcommand{\hsp}[1]{\hspace{-#1 cm}}
\newcommand{\Norm}{{\rm N}_{L/F}}
\theoremstyle{plain}
\newtheorem{lemma}{Lemma}[section]
\newtheorem{thm}{Theorem}[section]
\newtheorem*{theorem}{Theorem}
\newtheorem{cor}{Corollary}[section]
\newtheorem{prop}[lemma]{Proposition}
\theoremstyle{definition}
\newtheorem{dfn}{Definition}[section]
\newtheorem{mytable}{Table}
\theoremstyle{remark}
\newtheorem*{rem}{Remark}
\begin{document}
\author{Jolanta Marzec}
\address{Department of Mathematics \\ Technische Universit\"at Darmstadt \\ Germany \newline\indent
Institute of Mathematics \\ University of Silesia\\
  Katowice, Poland} \email{marzec@mathematik.tu-darmstadt.de}

\title{Maass relations for Saito-Kurokawa lifts of higher levels}

\begin{abstract}
It is known that among Siegel modular forms of degree $2$ and level $1$ the only functions that violate the Ramanujan conjecture are Saito-Kurokawa lifts of modular forms of level $1$. These are precisely the functions whose Fourier coefficients satisfy Maass relations. More generally, the Ramanujan conjecture for $\mathrm{GSp}_4$ is predicted to fail only in case of CAP representations. It is not known though whether the associated Siegel modular forms (of various levels) still satisfy a version of Maass relations.  
We show that this is indeed the case for the ones related to P-CAP representations. 
Our method generalizes an approach of Pitale, Saha and Schmidt who employed representation-theoretic techniques to (re)prove this statement in case of level $1$. In particular, we compute and express certain values of a global Bessel period in terms of Fourier coefficients of the associated Siegel modular form. Moreover, we derive a local-global relation satisfied by Bessel periods, which allows us to combine those computations with a characterization of local components of CAP representations.
\end{abstract}

\maketitle

\section{Introduction}
It is known that a Siegel modular form $F$ is a (classical) Saito-Kurokawa lift of an elliptic modular form $f$ if and only if its Fourier coefficients satisfy the \emph{Maass relations}
$$a(F,\mat{a}{b/2}{b/2}{c}) =\sum_{r|\gcd(a,b,c)} r^{k-1} a(F,\mat{ac\over r^2}{b\over 2r}{b\over 2r}{1})\, .$$
The classical cuspidal Saito-Kurokawa lift of weight $k$ is a lift from a cuspidal modular form $f\in S_{2k-2}^{(1)}(\SL_2(\Z))$ with $k$ even; it is a cuspidal Siegel modular form $F\in S_k^{(2)}(\Sp_4(\Z))$. The first construction of such a lift was given by Maass in \cite{maass1979} using correspondences between Siegel and classical modular forms, Jacobi forms and modular forms of half-integral weight (see also \cite{eichzag}). However, Saito-Kurokawa lifts can be also constructed using representation theory (\cite{ps-sk}, \cite{sch-funct}). The advantage of the latter is that it can be easily generalised to lifts of modular forms of higher level, and also with an odd weight. In this case, if $k$ is even\footnote{If $k$ is odd, the construction leads to a non-holomorphic function (cf. \cite{PSSc1}).}, for $f\in S_{2k-2}^{(1)}(\G_0(N))$ with $N$ square-free there exists a cuspidal Siegel modular form of weight $k$ invariant under the action of a congruence subgroup of $\GSp_4(\Z)$ such that its spinor $L$-function is given by 
$$L(s,F)=C(s)L(s,f)\zeta(s-k+1)\zeta(s-k+2),$$
where $C(s)$ comes from a contribution at $p=\infty$ and $p|N$ (for a precise statement see \cite[Theorem 5.2]{schsaito}\footnote{
For us, $L(s,F)$ denotes a classical spinor $L$-function associated to $F$, and not to an automorphic representation corresponding to $F$; hence the shift in the argument in comparison to \cite{schsaito}.}).
This does not tell us though anything about the coefficients of $F$ and whether they satisfy similar Maass relations. Pitale, Saha and Schmidt showed in \cite{PSSc1} that this is indeed the case if $F\in S_k^{(2)}(\Sp_4(\Z))$ is a Hecke eigenform. 

From a representation theoretic point of view, a Saito-Kurokawa lift produces from a cuspidal automorphic representation $\pi$ of $\mathrm{PGL}_2(\A)$ a cuspidal automorphic representation $\Pi$ of $\mathrm{PGSp}_4(\A)$; we can think of $f$ and $F$ as vectors of matching weight in the vector spaces of $\pi$ and $\Pi$. What is important is that any representation $\Pi$ we obtain via this (generalised) Saito-Kurokawa lifting is a CAP representation. 

More precisely, consider a cuspidal Siegel modular form $F$ of level $\G_0(N_1, N_2)$ (see notation below). We say that $F$ is associated to a CAP representation if the following are true.
\begin{enumerate}
\item[1)] The adelisation of $F$ gives rise to an irreducible automorophic representation $\Pi$ of $\GSp_4(\A)$.
\item[2)] The representation $\Pi$ is equivalent at almost all places to a constituent of a globally induced representation from a proper parabolic subgroup of $\GSp_4$.
\end{enumerate}
Furthermore, we say that $F$ is associated to a P-CAP representation if the proper parabolic subgroup above is the Siegel parabolic subgroup.  The classical Saito-Kurokawa lifts  correspond exactly to the P-CAP representations. 
It is known (see \cite{PSch-Ramanujan}, Corollary 4.5) that if $k\ge 3$, then $F\in S_k^{(2)}(\G_0(1,N))$ that is associated to a CAP representation  is automatically associated to a P-CAP representation. If $k=1$ or $2$, one also has CAP representations associated to other parabolics (the so-called B-CAP and Q-CAP representations). 

Note that the first condition above automatically implies that $F$ is an eigenform of the local Hecke algebra at all primes not dividing $N_2$. For general $N_1$, $N_2$, there is no known explicit construction that generalises the classical Saito-Kurokawa lifts and exhausts the set of all P-CAP $F$ of level $\G_0(N_1, N_2)$. It seems difficult then to directly prove Maass relations from construction. Even for $N_1=1$ this is already reflected in a comment made by Ibukiyama in \cite{ibu-SKlift} after he provides the relations for a special subset of Fourier coefficients.
In this work we are able to derive Maass relations using methods of representation theory.

\begin{theorem} 
Let $N_1$, $N_2$ be positive integers, $N_1|N_2$, and let F be a cuspidal Siegel modular form of weight k and level $\G_0(N_1, N_2)$ that is associated to a P-CAP representation. Let $a, b, c$ be integers such that $\gcd(a,b,c,N_2)= 1$, $b^2-4ac<0$ and $\(\frac{b^2-4ac}{p}\) =-1$ for all $p|N_1$, and let $L$ be any positive integer whose all prime factors divide $N_2$. Then 
$$a(F, L\mat{a}{b/2}{b/2}{c}) = \sum_{r|\gcd(a,b,c)} r^{k-1}  a(F, L\mat{{ac\over r^2}}{{b\over 2r}}{{b\over 2r}}{1}).$$
\end{theorem}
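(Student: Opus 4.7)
The plan is to translate the Maass relation into an identity of Bessel periods of the automorphic form $\Phi_F$ associated with $F$, and then exploit the P-CAP structure of $\Pi$ to reduce this global identity to local ones, one at each prime. First I would set up the standard Bessel expansion: fix $-D=b^2-4ac$, take the imaginary quadratic field $K=\Q(\sqrt{-D})$, and for each character $\Lambda$ of the class group of an appropriate order in $K$ define the global Bessel period $B_\Lambda(\Phi_F,g)$ by integrating $\Phi_F$ against a character of the Bessel subgroup. Standard class-group inversion writes $a(F,LS)$ as an average over such Bessel periods, so the sought Maass relation reduces to one among Bessel periods for every $\Lambda$.

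Next, irreducibility of $\Pi$ lets me factorise each $B_\Lambda(\Phi_F,g)$ on a pure tensor as a product of local Bessel functionals $\mathcal B_p$ on $\Pi_p$. Since the divisors $r$ in the Maass sum are coprime to $N_2$ and $L$ is supported on primes of $N_2$, the local group elements at primes $p\mid N_2$ take the same form for every $r$; hence the $p$-local factors are constant in $r$ and contribute as an overall multiplier. The hypothesis $\bigl(\tfrac{b^2-4ac}{p}\bigr)=-1$ for $p\mid N_1$ is exactly what ensures that $K\otimes\Q_p$ is an unramified quadratic field extension at these primes, so the local Bessel torus is anisotropic and the functional $\mathcal B_p$ is nonzero on $\Pi_p$; without this, the factorisation would be vacuous.

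The main content sits at primes $p\nmid N_2$. There $\Pi_p$ is unramified and, by the P-CAP hypothesis, is the spherical constituent of a representation parabolically induced from the Siegel parabolic, with inducing data pinned down by the $L$-factor $L_p(s,F)=L_p(s,f)\zeta_p(s-k+1)\zeta_p(s-k+2)$. The required local identity is then a Sugano-type recursion on the spherical Bessel function of this induced representation, structurally the same one used in~\cite{PSSc1} to prove the classical Maass relations. Multiplying the local identities across all primes and averaging over the class-group characters $\Lambda$ recovers the global Maass relation for Fourier coefficients.

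The hard part is this last step. Although $F$ admits no explicit construction in the higher-level generality considered, one must execute the spherical Bessel computation on an arbitrary irreducible constituent of a Siegel-induced representation, match it against the specific group element encoding $LT$, and carefully track the different orders in $K$ that appear as the discriminant $-D/r^2$ varies with $r$. Making this local calculation uniform in the inducing data, and compatible with a globally chosen test vector adapted to the level $\G_0(N_1,N_2)$, is where the main technical effort is concentrated.
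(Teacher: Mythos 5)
Your overall architecture --- convert Fourier coefficients into values of Bessel periods, factor these into local Bessel functionals by uniqueness, and feed in a Sugano-type recursion ($B_p(h(l,m))=\sum_{i=0}^{l}p^{-i}B_p(h(0,l+m-i))$ for the type IIb components at $p\nmid N_2$, with the primes dividing $N_2$ contributing a common factor that cancels) --- is the paper's strategy. But two steps in your plan would not go through as written.

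First, the ``standard class-group inversion'' that recovers an individual coefficient $a(F,LS)$ by averaging Bessel periods over all characters $\Lambda$ fails for P-CAP forms. The local components at $p\nmid N_2$ are of type IIb, and such a representation admits a $(\Lambda_p,\theta_p)$-Bessel model for exactly one character, $\Lambda_p=(\chi\sigma)\circ\Norm$; every global Bessel period with nontrivial $\Lambda$ therefore vanishes identically, and Fourier inversion over $\Lambda$ can only return the \emph{sum} of coefficients over a ray class, not each coefficient separately. The paper closes this gap with Lemma \ref{T_invariance}: for type IIb components the functional $\Psi\mapsto\Psi_S(1)$ is automatically $T_S(\Q_p)$-invariant at $p\nmid N_2$ (via one-dimensionality of $(U,\theta_S)$-functionals), so all $a(F,T)$ with $T$ in a fixed class of $H_1(dM^2,L;\Gamma^0(N_1))$ are \emph{equal}, and the single $\Lambda=1$ period already pins down each coefficient. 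Without this ingredient your reduction stalls at an identity among class-averages.

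Second, you misidentify the role of the hypothesis $\left(\frac{b^2-4ac}{p}\right)=-1$ for $p\mid N_1$. It is not about anisotropy or nonvanishing of local Bessel functionals at $p\mid N_1$: those primes divide $N_2$ and their local contribution is absorbed into an uncomputed constant common to both sides. The issue is that the method only reaches coefficients $a(F,T)$ with $T$ in the image $H_1(dM^2,L;\Gamma^0(N_1))$ of the ray class group $\Cl_d(MN_1)$ under $c\mapsto\phi_{L,M}(c)$, and the counting arguments of Section \ref{sec:ray_class_groups} (Proposition \ref{phi_surjective}, Corollary \ref{cor:H_1=H}) show this parametrisation of $\Gamma^0(N_1)$-classes of forms is surjective precisely when $\left(\frac{dM^2}{p}\right)=-1$ for all $p\mid N_1$. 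That is what guarantees the given matrix is of the reachable shape; your proposal contains no mechanism for this and would prove the relation only for an unidentified subset of matrices.
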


In fact, theorem stated above is a corollary of a more general result (Theorem \ref{thm:Maass_relation}), where the condition on $\(\frac{b^2-4ac}{p}\)$ is replaced with an exact condition on a certain ray class group. The simplification presented above follows from a connection between equivalence classes of binary quadratic forms and elements of suitable ray class groups; this is the topic of section \ref{sec:ray_class_groups} and may be of independent interest.

A reader may find it surprising or unsatisfactory that, in contrary to the classical Maass relations, the right hand side of the above equality contains $L$. However, there are modular forms for which this cannot be changed. Indeed, as Schmidt showed in \cite{schsaito}, there exist paramodular forms that are Saito-Kurokawa lifts, and all their Fourier coefficients $a(F,T)$ have of course the $(2,2)$-entry of $T$ divisible by the level. 

We should also mention that Ibukiyama's relations alluded to above, when restricted to $F$ with a trivial character, are included in our result. In particular, one of the assumptions there was that $F\in S_k^{(2)}(\G_0(N))$ has a non-zero Fourier Jacobi coefficient of index $1$; this is not necessary in our construction.

Our work extends the method developed in \cite{PSSc1} to Saito-Kurokawa lifts of higher levels. It bases on the relation satisfied by vectors in local Bessel models for P-CAP representations (\cite{PSSc1}, Theorem 2.1) and the fact that certain values of a global Bessel period associated to a Siegel modular form $F$ can be expressed in terms of Fourier coefficients of $F$. We compute these values explicitly for Siegel modular forms invariant under the action of $\G_0(N_1,N_2)$ with $N_1|N_2$, and combine it with the local-global relation \eqref{Besselrelation} satisfied by Bessel periods to obtain a relation between Fourier coefficients (Theorem \ref{thm:the_relation}; generalisation of \cite[Theorem 2.10]{kst2}). Theorem \ref{thm:the_relation} is a basis for our main result.

Another approach to obtain Maass relations for $F\in S_k^{(2)}(\G)$ for some congruence subgroup $\Gamma$ of $\Sp_4(\Q)$ might be to follow the aforementioned construction of Saito-Kurokawa lifting due to Piatetski-Shapiro, \cite{ps-sk}. This was done by Ichino \cite[Lemma 7.1]{ich} for $\Gamma =\Sp_4(\Z)$ and very recently extended by Chen \cite[Lemma 5.7]{ch} to $\Gamma =\G_0(1,N)$ with $N$ odd and square-free. In fact, their motivation to choose such an approach lied in establishing a formula for pullbacks of Saito-Kurokawa lifts and its application to algebraicity of central critical values of certain automorphic $L$-functions for $\Sp_2\times \GL_2$ and $\GL_3\times \GL_2$.

Throughout the paper we use the following notation.
\begin{itemize}
\item $\N, \Z, \Q, \R, \C$ stand for the natural, integer, rational, real and complex numbers respectively;
$\Q_p$ denotes the $p$-adic numbers and $\Z_p$ the $p$-adic integers, $\A$ stands for the adeles of $\Q$ and $\A_f:=\prod_{p<\infty}'\Q_p$ the finite adeles;\\
the set of invertible elements in a ring $R$ is denoted by $R\ti$;
\item $M_n$ denotes the set of $n\times n$ matrices, whose identity element is $1_n$, and $i_2:=i1_2$; 
we use the superscript $M_n^{\rm sym}$ for symmetric matrices, and $M_n^+$ for the matrices with positive determinant;\\
we distinguish a set
$$\mathcal{P}_n:=\{ T\in {1\over 2}M_n^{\rm sym}\(\Z\)\colon T\mbox{ half-integral and positive definite}\} ,$$
where \emph{half-integral} means that $T$ has integers on the diagonal;\\
$\T{T}$ is the transpose of $T$ and $\tr T$ the trace of $T$;
$$\cont \(\begin{smallmatrix} a & b/2\\ b/2 & c\end{smallmatrix}\):=\gcd(a,b,c)\, ,\quad\disc \(\begin{smallmatrix} a & b/2\\ b/2 & c\end{smallmatrix}\):=b^2-4ac$$
are the content and the discriminant of the matrix $\(\begin{smallmatrix} a & b/2\\ b/2 & c\end{smallmatrix}\)$;
\item The letter $G$ will always stand for the group $\GSp_4$ defined as follows: 
$$\GSp_4(\Q ):=\{ g\in \mathrm{GL}_4(\Q )\colon\T{g}\(\begin{smallmatrix}  &  & 1 &  \\  &  &  & 1 \\ -1 &  &  &  \\  & -1 &  &  \\ \end{smallmatrix}\) g=\mu(g)\(\begin{smallmatrix}  &  & 1 &  \\  &  &  & 1 \\ -1 &  &  &  \\  & -1 &  &  \\ \end{smallmatrix}\)\} ,$$where $\mu (g)\in\Q\ti$,
$\Sp_4(\Q ):=\{ g\in G(\Q)\colon\mu(g)=1\} ;$
and for $n_1\leq n_2$:
$$I(n_1,n_2):=G(\Z_p)\cap\(\begin{matrix} \Z_p & p^{n_1}\Z_p & \Z_p & \Z_p\\ \Z_p & \Z_p & \Z_p & \Z_p\\ p^{n_2}\Z_p & p^{n_2}\Z_p & \Z_p & \Z_p\\ p^{n_2}\Z_p & p^{n_2}\Z_p & p^{n_1}\Z_p & \Z_p\\\end{matrix}\) ,$$
\item For $N=\prod_p p^{n_p}$ and $N_1|N_2$ having similar prime factorizations we define
$$I_{N_1,N_2}:=\prod_{p<\infty} I(n_{1,p},n_{2,p}) ;$$
$$K^*_{N}:=\prod_{p<\infty} \{ g\in\GL_2(\Z_p)\colon g= \mat{*}{}{}{*} \bmod{p^{n_p}}\} ;$$
$$\hsp{0.3}\G_0(N_1,N_2):=G(\Q)\cap G(\R)^+I_{N_1,N_2}=\Sp_4(\Z)\cap\(\begin{matrix} \Z & N_1\Z & \Z & \Z\\ \Z & \Z &\Z & \Z\\ N_2\Z & N_2\Z & \Z & \Z\\ N_2\Z & N_2\Z & N_1\Z & \Z\end{matrix}\) ;$$
\item For $N\in\N$,
$$\G_0(N):=\SL_2(\Z )\cap\mat{\Z}{\Z}{N\Z}{\Z} ,\quad
\G^0(N):=\SL_2(\Z )\cap\mat{\Z}{N\Z}{\Z}{\Z} ;$$
\item For $N\in\N, X\in\Z$ we put
$$(X,N^{\infty}):=\prod_{p|N}p^{\ord_p X}\, ,\qquad\mbox{where}\quad\ord_p X:=\max\{ n\in\Z\colon p^n|X\} ,$$
and $N^{\infty}$ denotes a formal number such that $N^l|N^{\infty}$ for all $l\in\Z^+$;
$$p^k\parallel N \qquad\mbox{means}\qquad k=\ord_p N ;$$
$\( X\over p\)$ denotes the Legendre symbol.
\end{itemize}

\section*{Acknowledgements}
The work presented in this paper was carried out at the University of Bristol and represents a part of PhD thesis of the author. Her studies and research were possible thanks to a funding provided by EPSRC. The author would like to
thank her supervisor Abhishek Saha for guidance, support and patience in explaining various subtleties.
\section{Preliminaries}
Let $\pi=\otimes_p \pi_p$ be an irreducible automorphic cuspidal representation of $G(\A)$ with trivial central character and such that $\pi_{\infty} =\mathcal{E} (k,k)$, the lowest weight representation of scalar minimal K-type of weight $k$ (see \cite{pitsch2}). Let $\Phi$ be an automorphic form in the space of $\pi$ and let $\phi_{\infty}$ be a lowest weight vector of $\pi_{\infty}$. This means that
\begin{equation}\label{eq:weight}
\Phi(g k_{\infty})=j(k_{\infty},i_2)^{-k}\Phi (g)\qquad\mbox{for all}\quad k_{\infty}\in K_{\infty},\, g\in G(\A)\, , 
\end{equation}
where 
\begin{itemize}
\item[(i)] $K_{\infty}$ is a maximal compact subgroup of $G(\R)^+$ such that any $k_{\infty}$ in $K_{\infty}$ is of the form $\(\begin{smallmatrix} A & B\\ -B & A \end{smallmatrix}\)$,
\item[(ii)] $j(\(\begin{smallmatrix} A & B\\ C & D \end{smallmatrix}\),i_2)=\det (Ci_2+D)$.
\end{itemize}
Furthermore, assume that there are integers $N_1, N_2$ with $N_1 | N_2$ such that $\Phi$ is right invariant by $I_{N_1,N_2}$. 

Define 
\begin{equation}\label{deadelisation}
F(Z):=\Phi (g) j(g,i_2)^k\mu(g)^{-k}\, ,
\end{equation}
where $g\in G(\R )$ is such that $g\cdot i_2=Z$ and 
$$\(\begin{smallmatrix} A & B\\ C & D \end{smallmatrix}\)\cdot i_2:=(Ai_2+B)(Ci_2+D)^{-1}\, .$$ 
Such a function $F$ is holomorphic and satisfies 
$$\forall_{\gamma=\(\begin{smallmatrix} A & B\\ C & D \end{smallmatrix}\)\in \G_0(N_1,N_2)}\, F|_k \gamma (Z):=\mu(\gamma)^k j(\gamma ,Z)^{-k}F(\gamma\cdot Z)=F(Z)\, ;$$
it is a cuspidal \emph{Siegel modular form} of degree $2$, level $\G_0(N_1,N_2)$ and weight $k$ that is an eigenform of the local Hecke algebra at all primes $p\nmid N_2$.\footnote{In fact, these functions, coming from irreducible representations, span the space of Siegel cusp forms of degree $2$, level $\G_0(N_1,N_2)$ and weight $k$. 
}
It follows that $F$ admits a unique Fourier expansion
$$F(Z)=\sum_{T\in\mathcal{P}_2} a(F,T)e(\tr (TZ)) ,\quad\mbox{where}\quad e(x):=e^{2\pi ix} ,$$
and its Fourier coefficients satisfy
\begin{equation}\label{eq:G^0(N)-equiv_of_Fourier_coeff}
a(F,T)=a(F,\T{A}TA)\qquad \mbox{for every } A\in\G^0(N_1).
\end{equation}

Observe that the correspondence \eqref{deadelisation} is bijective in a sense that to any Siegel cusp form $F$ that satisfies the above conditions and gives rise to an irreducible representation we can attach an automorphic form $\Phi$ via
$$\Phi (g) :=F|_k g_{\infty}(i_2),\quad g=g_{\Q}g_{\infty}g_0\in G(\Q)G(\R)^+I_{N_1,N_2}=G(\A) ;$$
$\Phi$ is called the \emph{adelisation} of $F$.
\section{Local and global Bessel models}\label{loc-glob}
Throughout this section $F$ is a non-archimedean local field of characteristic zero, $\o$ its ring of integers, $\p$ the maximal ideal of $\o$, $\varpi$ a generator of $\p$, and $q$ the cardinality of the residue field $\o /\varpi\o$. For our global application we will only need $F=\Q_p$, but because of a deeper analysis of an ideal class group in the next section, we would like to describe a torus $T$ in more detail.
\subsection{Local Bessel models for $\GSp_4$}\label{sec:Bessel_intro}
We recall the definition of the Bessel model following the exposition of Furusawa \cite{fur} and Pitale, Schmidt \cite{PS1}. Let $S \in M_2(F)$ be a symmetric matrix such that $d=\disc S=-4 \det S\neq 0$. For
$$S =\begin{pmatrix}  a & b/2\\  b/2 & c\\\end{pmatrix}$$
we define the element
$$\xi = \xi_S = \begin{pmatrix} b/2 & c\\ -a & -b/2\\\end{pmatrix}$$
and denote by $F(\xi)$ a two-dimensional $F$-algebra generated by $1_2$ and $\xi$. Note that 
$$\xi^2 = \begin{pmatrix} \frac{d}{4} &\\& \frac{d}{4} \end{pmatrix} .$$ 
Depending whether $d$ is a square in $F\ti$ or not, $F(\xi)$ is isomorphic either to $L=F\oplus F$ or to the field $L=F(\sqrt{d})$ via
\begin{equation}\label{eq:xi_to_L}
x 1_2 + y\xi\, \longmapsto\begin{cases} 
x +y\frac{\sqrt{d}}{2} & d\not\in (F\ti )^2\\ 
(x+y\frac{\sqrt{d}}{2}, x-y\frac{\sqrt{d}}{2})& d\in (F\ti )^2\end{cases}\, .\end{equation}
The determinant map on $F(\xi)$ corresponds to the norm map on $L$, defined by $N_{L/F}(z)=z\bar{z}$, where $z\mapsto\bar{z}$ is the usual involution on $L$ fixing $F$. We define the Legendre symbol as
$$\( {L\over\p}\) =\begin{cases} -1 & \mbox{if } L/F\mbox{ is an unramified field extension}\\
0 & \mbox{if } L/F\mbox{ is a ramified field extension}\\
1 & \mbox{if } L=F\oplus F\end{cases}\, .$$
If $L$ is a field, denote by $\o_L,\p_L,\varpi_L$ the ring of integers, the maximal ideal of $\o_L$ and a fixed choice uniformizer in $\o_L$, correspondingly. If $L=F\oplus F$, let $\o_L=\o\oplus\o$, $\varpi_L=(\varpi,1)$. Define an ideal $\mathfrak{P}:=\p\o_L$ in $\o_L$; note that $\mathfrak{P}=\p_L$ is prime only if $\( {L\over\p}\) =-1$, otherwise $\mathfrak{P}=\p_L^2$ if $\( {L\over\p}\) =0$ and $\mathfrak{P}=\p\oplus\p$ if $\( {L\over\p}\) =1$.

We define a subgroup $T =T_S$ of $\GL_2$ by
$$T(F) = \{g \in \GL_2(F): \T{g}Sg =\det(g)S\}\, .$$
It is not hard to verify that $T(F) = F(\xi )\ti$, so that $T(F)\cong L\ti$. We identify $T(F)$ with $L\ti$ via \eqref{eq:xi_to_L}.
We can consider $T$ as a subgroup of $G$ via
$$T \ni g \longmapsto\begin{pmatrix} g & 0\\ 0 & \det g\cdot \T{g^{-1}}\end{pmatrix} \in G.$$
Let us denote by $U$ the subgroup of $G$ defined by
$$U = \{u(X) =\begin{pmatrix} 1_2 & X\\ 0 & 1_2\\\end{pmatrix}\;|\;\T{X} = X\} ,$$
and finally let $R$ be the subgroup of $G$ defined by $R=TU$.

We fix a non-trivial additive character $\psi$ of $F$ such that $\psi$ is trivial on $\o$, but non-trivial on $\p^{-1}$, and define the character $\theta =\theta_S$ on $U(F)$ by 
\begin{equation}\label{def:theta}
\theta(u(X)):=\psi(\tr(SX))\, . 
\end{equation}
Let $\Lambda$ be a character of $T(F)$ such that $\Lambda |_{F\ti} =1$. Denote by $\Lambda \otimes \theta$ the character of $R(F)$ defined by $(\Lambda \otimes \theta)(tu) =\Lambda(t)\theta(u)$ for $t\in T(F), u\in U(F).$

Let $\pi$ be an irreducible admissible representation of the group $G(F)$ with trivial central character. We say that such a $\pi$ has a \emph{local Bessel model} of type $(\Lambda, \theta)$ if $\pi$ is isomorphic to a subrepresentation of the space  of all locally constant functions $B$ on $G (F)$ satisfying the local Bessel transformation property 
$$B(rg)=(\Lambda\otimes\theta )(r)B(g) \mbox{ for all } r \in R(F) \mbox{ and } g \in G (F)\, .$$ 
It is known by \cite{nov}, \cite{prasadbighash} that if a local Bessel model exists, then it is unique. If the local Bessel model for $\pi$ exists, we denote it by $\mathcal{B}^{\pi}_{\Lambda, \theta}$. In this case, we fix a (unique up to scalar) isomorphism of representations $\pi\to\mathcal{B}^{\pi}_{\Lambda, \theta}$ and denote the image of any $\phi\in\pi$ by $B_{\phi}$.

In the Lemma below we explain how to switch between Bessel models defined with respect to different matrices $S$. Together with  \cite[Lemma 1.1]{PS1long} that will allow us to assume, without any loss of generality, that the entries $a,b,c$ and the discriminant $d=b^2-4ac$ of $S$ satisfy the following conditions:
\begin{eqnarray}\label{conditions_on_S}
\hsp{0.5}\bullet &&\hspace{-0.5cm} a,b\in\o\mbox{ and } c\in\o\ti .\nonumber\\
\hsp{0.5}\bullet &&\hspace{-0.5cm}\mbox{If } d\notin (F\ti)^2,\mbox{ then } d\mbox{ is a generator of the discriminant of } L/F.\\
\hsp{0.5}\bullet &&\hspace{-0.5cm}\mbox{If } d\in (F\ti)^2,\mbox{ then } d\in\o\ti .\nonumber
\end{eqnarray}

\begin{lemma}
	Let $S\in M_2(F)$ be symmetric, and let $\Lambda$ be a character of the associated group $T_S(F)$. Let $A\in\GL_2(F)$ and $\alpha\in F\ti$. Let $S'=\alpha\,^t\! ASA$. Then $T_{S'}(F)=A^{-1}T_S(F)A$, so that
	$$\Lambda'(t')=\Lambda(At'A^{-1}),\qquad t'\in T_{S'}(F)\, ,$$
	defines a character of $T_{S'}(F)$. Let $\pi$ be an irreducible admissible representation of $G(F)$. Then $\pi$ has a local Bessel model of type $(\Lambda, \theta_S)$ if and only if it has a local Bessel model of type $(\Lambda', \theta_{S'})$.
\end{lemma}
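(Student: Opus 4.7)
The plan is to verify the algebraic identities directly and then transport a Bessel model of type $(\Lambda,\theta_S)$ to one of type $(\Lambda',\theta_{S'})$ by conjugating inside $G(F)$ with a carefully chosen element.

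First I would verify $T_{S'}(F) = A^{-1}T_S(F)A$ and that $\Lambda'$ is a well-defined character of $T_{S'}(F)$ trivial on $F\ti$. Substituting $S' = \alpha\T{A}SA$ into the defining relation $\T{g'}S'g' = \det(g')S'$ of $T_{S'}$ and cancelling the scalar $\alpha$, one sees immediately that $g'\in T_{S'}(F)$ if and only if $Ag'A^{-1}\in T_S(F)$. Since conjugation by $A$ fixes the center $F\ti\subset \GL_2(F)$ pointwise, triviality of $\Lambda$ on $F\ti$ transfers directly to $\Lambda'$.

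The main step is to construct an explicit element $h \in G(F)$ conjugating the Bessel data for $S'$ onto that for $S$. I would take
$$h := \mat{A}{0}{0}{\alpha^{-1}\T{A^{-1}}},$$
which lies in $G(F)$ with $\mu(h) = \alpha^{-1}$. A direct block computation, together with the embedding $T \hookrightarrow G$ recalled in section~\ref{sec:Bessel_intro}, shows that conjugation by $h$ sends $T_{S'}(F)\ni t' \mapsto At'A^{-1} \in T_S(F)$ and sends $u(X) \mapsto u(\alpha AX\T{A})$. Combined with the identity $\tr(S\cdot \alpha AX\T{A}) = \alpha\tr(\T{A}SAX) = \tr(S'X)$, this yields
$$(\Lambda\otimes\theta_S)(h r' h^{-1}) = (\Lambda'\otimes\theta_{S'})(r')\quad\text{for all } r' \in R_{S'}(F).$$

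With $h$ in hand the transport of models is automatic. Assuming $\pi$ is realized as $\mathcal{B}^\pi_{\Lambda,\theta_S}$, define $\Psi(B)(g):=B(hg)$. A routine check shows that $\Psi(B)$ satisfies the Bessel transformation property of type $(\Lambda',\theta_{S'})$ and that $\Psi$ commutes with right translation by $G(F)$; irreducibility of $\pi$ then implies that $\Psi$ is injective and its image realizes a Bessel model of $\pi$ of the desired type. The converse implication follows by symmetry, applying the same construction with $(A,\alpha)$ replaced by $(A^{-1},\alpha^{-1})$ (which sends $S'\mapsto S$). The only subtle point, and what I expect to be the main bookkeeping obstacle, is pinning down $h$: the factor $\alpha^{-1}$ in its lower-right block is forced simultaneously by the symplectic condition $h\in G(F)$ and by the requirement that conjugation on $U$ reproduce the scalar $\alpha$ appearing in $S'=\alpha\T{A}SA$.
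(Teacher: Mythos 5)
Your proposal is correct and follows exactly the paper's argument: the paper's (one-line) proof defines $B'(g):=B\bigl(\bigl(\begin{smallmatrix} A & \\ & \alpha^{-1}\T{A^{-1}}\end{smallmatrix}\bigr) g\bigr)$, which is precisely your map $\Psi$ with the same conjugating element $h$. Your additional verifications (the conjugation formulas on $T_{S'}$ and $U$, the identity $\tr(S\cdot\alpha AX\T{A})=\tr(S'X)$, and the use of irreducibility) are exactly the routine checks the paper leaves implicit.
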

\begin{proof} Indeed, if $B\in\mathcal{B}^{\pi}_{\Lambda, \theta}$, then $B'(g):=B(\(\begin{smallmatrix} A & \\ & \alpha^{-1}\T{A^{-1}}\end{smallmatrix}\) g)$, $g\in G(F)$ satisfies the Bessel transformation property and the map $B\to B'$ gives rise to a local Bessel model $\mathcal{B}^{\pi}_{\Lambda ', \theta_{S'}}$. 
\end{proof}

\subsection{Sugano's formula}\label{sec:Sugano}
We now investigate more closely the case when $\pi$ is \emph{spherical}, that is, $\pi$ has a non-zero $G(\o)$-invariant vector. Such a representation is a constituent of a representation parabolically induced from an unramified character $\gamma$ of the Borel subgroup of $G(F)$. The values of the character $\gamma$ at the matrices
$$\(\begin{smallmatrix} \varpi & & & \\ & \varpi & & \\ & & 1 & \\ & & & 1 \end{smallmatrix}\)\, ,\(\begin{smallmatrix} \varpi & & & \\ & 1 & & \\ & & 1 & \\ & & & \varpi \end{smallmatrix}\)\, ,\(\begin{smallmatrix} 1 & & & \\ & 1 & & \\ & & \varpi & \\ & & & \varpi \end{smallmatrix}\)\, ,\(\begin{smallmatrix} 1 & & & \\ & \varpi & & \\ & & \varpi & \\ & & & 1 \end{smallmatrix}\)$$
are called the \emph{Satake parameters} of $\pi$ and determine the isomorphism class of $\pi$. Because central character of $\pi$ is trivial, we can call them in turn $\alpha ,\beta ,\alpha^{-1} ,\beta^{-1}$.

Throughout this section we assume the following:
\begin{enumerate}[label=(\roman*)]
	\item\label{as:pi} $\pi$ is a spherical representation of $G(F)$,
	\item\label{as:S} $S =\(\begin{smallmatrix}  a & b/2\\ b/2 & c\end{smallmatrix}\)$ with $a,b,c$ satisfying the conditions \eqref{conditions_on_S},
	\item\label{as:theta} $\theta=\theta_S$ is the character of $U(F)$ as in \eqref{def:theta},
	\item\label{as:Lambda} $\Lambda$ is a character of $T(F)$ that is invariant under the subgroup
	$$T(n):=T(F) \cap \{g \in \GL_2(\o): g= \mat{\lambda}{}{}{\lambda} \bmod{\p^n},\, \lambda\in\o\ti \}\, ,$$ 
	for some non-negative integer $n$.
\end{enumerate}

The next Lemma shows equivalent ways of writing the group $T(n)$. Thanks to this, our definition coincides with the one used in \cite{PSSc1} and \cite{PS1}.

\begin{lemma}\label{lem:T_p-and-L}
	The group $T(n)$ defined above is isomorphic to each of the following:
	$$T(F) \cap \{g \in \GL_2(\o): g= \mat{*}{}{\ast}{*} \bmod{\p^n}\} ,$$
	$$T(F) \cap \{g \in \GL_2(\o): g= \mat{*}{}{}{*} \bmod{\p^n}\}$$
	and (under the isomorphism $T(F)\cong L\ti$)
	$$\o\ti\(1+\mathfrak{P}^n\)\cap\o_L\ti\, .$$
	Moreover, every character of $T(F)$ that is trivial on $\o\ti$ is trivial on $T(n)$ for $n$ big enough.
\end{lemma}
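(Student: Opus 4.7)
The plan is to use the explicit parametrisation $T(F) = F(\xi)^\times = \{x\cdot 1_2 + y\xi : x,y \in F\}^\times$, in which an element takes the matrix form
$$g = x\cdot 1_2 + y\xi = \mat{x + yb/2}{yc}{-ya}{x - yb/2}.$$
For the equivalence of the three matrix descriptions of $T(n)$, the containments from ``scalar'' to ``diagonal'' to ``lower triangular with $(1,2)$-entry in $\p^n$'' (all taken modulo $\p^n$) are immediate. For the reverse direction, if $g \in T(F) \cap \GL_2(\o)$ has its $(1,2)$-entry $yc$ in $\p^n$, then $c \in \o\ti$ from \eqref{conditions_on_S} forces $y \in \p^n$. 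Since $a, b \in \o$, the other off-diagonal entry $-ya$ and the quantity $yb$ both lie in $\p^n$; hence the two diagonal entries $g_{11} = x + yb/2$ and $g_{22} = x - yb/2$ (both in $\o$) differ by $-yb \in \p^n$. Setting $\lambda := g_{11} \in \o$ gives $g \equiv \lambda\cdot 1_2 \pmod{\p^n}$, and $\det g \in \o\ti$ forces $\lambda \in \o\ti$.

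For the identification with $\o\ti(1+\mathfrak{P}^n) \cap \o_L\ti$, I would pass to $L$ via \eqref{eq:xi_to_L}, sending $g \mapsto z := x + y\sqrt{d}/2$ (componentwise in the split case). The conditions \eqref{conditions_on_S} on $d$ are arranged so that the $\o$-module $\{x + y\xi : x, y \in \o\}$ corresponds precisely to $\o_L$; hence, since $\det g = \Norm(z)$, the requirement $g \in \GL_2(\o)$ becomes $z \in \o_L\ti$. A scalar congruence $g \equiv \lambda\cdot 1_2 \pmod{\p^n}$ with $\lambda \in \o\ti$ then translates, via $g - \lambda\cdot 1_2 = (x-\lambda)1_2 + y\xi$, to $z - \lambda \in \p^n + \p^n\sqrt{d}/2 \subseteq \p^n\o_L = \mathfrak{P}^n$, that is, $z \in \o\ti(1+\mathfrak{P}^n)$. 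Since $\o\ti(1+\mathfrak{P}^n) \subseteq \o_L\ti$ automatically, the intersection with $\o_L\ti$ adds nothing.

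The final assertion about characters is a standard smoothness argument: $T(F) \cong L\ti$ is locally profinite, so any character has open kernel, which contains $1 + \mathfrak{P}^n$ for $n$ sufficiently large. Combined with the hypothesis $\Lambda|_{\o\ti} = 1$, this yields triviality on $\o\ti(1+\mathfrak{P}^n)$, which by the previous step coincides with $T(n)$.

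I expect the main technical point to be the identification $\{x + y\xi : x, y \in \o\} \leftrightarrow \o_L$ used in step two; this is routine in odd residue characteristic (where $\{1, \sqrt{d}/2\}$ is easily seen to be an $\o$-basis of $\o_L$ under \eqref{conditions_on_S}) but needs the full strength of the conditions on $d$ to handle residue characteristic $2$ and the ramified and split cases uniformly. Everything else reduces to the matrix computation above and smoothness of characters on a locally profinite group.
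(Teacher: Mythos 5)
Your first step (the equivalence of the three matrix descriptions of $T(n)$) and your final smoothness argument for characters are fine and agree with the paper's. The gap is in the middle step, the identification with $\o\ti\(1+\mathfrak{P}^n\)\cap\o_L\ti$, and it is not merely a point that ``needs more care'' in residue characteristic $2$: the identification you assert is false there. You claim that $\{x+y\xi : x,y\in\o\}$ corresponds to $\o_L$, i.e.\ that $\o_L=\o+\o\frac{\sqrt d}{2}$. Under the normalisation \eqref{conditions_on_S}, if $L/F$ is unramified or split and the residue characteristic is $2$, then $d=b^2-4ac$ is a unit, which forces $b\in\o\ti$; in that case $N_{L/F}(\sqrt d/2)=-d/4\notin\o$, so $\sqrt d/2\notin\o_L$ and your lattice is not contained in $\o_L$ (nor does it contain $T(F)\cap M_2(\o)$, which has elements such as $\frac{b}{2}1_2+\xi=\mat{b}{c}{-a}{0}$ with $x=b/2\notin\o$). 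The correct integral basis, which is what the paper uses via \cite[Lemma 3.1.1]{pitsch}, is $\{1,\xi_0\}$ with $\xi_0=\frac{-b+\sqrt d}{2}$, i.e.\ $\o_L=\{\mat{x}{yc}{-ya}{x-yb}: x,y\in\o\}$; in that basis the $(1,1)$-entry is $x$ and the $(1,2)$-entry is $yc$ with $c\in\o\ti$, so membership of $g-\lambda 1_2$ in $\p^nM_2(\o)$ is literally equivalent to $x-\lambda\in\p^n$ and $y\in\p^n$, i.e.\ to $z\in\lambda(1+\mathfrak{P}^n)$.

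The same defect breaks your translation of the scalar congruence: from $g\equiv\lambda 1_2\pmod{\p^n}$ you can read off $y\in\p^n$ from the off-diagonal entries, but the diagonal entries only give $2(x-\lambda)\in\p^n$, so at $p=2$ you cannot conclude $x-\lambda\in\p^n$; and even granting that, $\p^n+\p^n\frac{\sqrt d}{2}$ is not contained in $\p^n\o_L=\mathfrak{P}^n$ in the cases above. The repair is not to work harder with the basis $\{1,\sqrt d/2\}$ but to replace it by $\{1,\xi_0\}$ (equivalently, to phrase everything in terms of the matrix entries $g_{11}$ and $g_{12}$ rather than your coordinates $x,y$); after that substitution your argument goes through and coincides with the paper's.
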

\begin{proof}
	By \cite[Lemma 3.1.1]{pitsch}, $\o_L=\o +\o\xi_0$, where 
	$$\xi_0=\begin{cases} \frac{-b+\sqrt{d}}{2} & \mbox{if } L \mbox{ is a field}\\
	\(\frac{-b+\sqrt{d}}{2} ,\frac{-b-\sqrt{d}}{2}\)& \mbox{if } L=F\oplus F\end{cases}\, .$$
	Therefore by the identification \eqref{eq:xi_to_L}, 
	$$\o_L=\{\mat{x}{yc}{-ya}{x-yb} :x,y\in\o\}\, .$$
	Hence, under the assumptions \eqref{conditions_on_S} and via the isomorphism $T(F)\cong L\ti$, the group $T(\o):=T(F)\cap\GL_2(\o)$ is isomorphic to $\o_L\ti$ and $T(F)\cap M_2(\o)\cong\o_L$. Moreover, for any $\lambda\in\o\ti$:
	$$\lambda(1+\mathfrak{P}^n)\cap\o_L\ti \cong\lambda(1_2+\p^n\o_L)\cap T(\o),$$
	and thus 
	\begin{equation}\label{eq:T_p-and-L}
	T(n)\cong\o\ti\(1+\mathfrak{P}^n\)\cap\o_L\ti\, .\tag{$\star$}
	\end{equation}
	
	Assume now that $g\in T(F)\cap\GL_2(\o)$ is congruent to $\(\begin{smallmatrix} * & \\ * & *\end{smallmatrix}\)\bmod\p^n$. We already know that $g$ must be of the form 
	$x 1_2+y\(\begin{smallmatrix} & c \\ -a & -b\end{smallmatrix}\)$ with $x,y\in\o$. However, because $c\in\o\ti$, we have $y\in\p^n\o$, and thus $g=\(\begin{smallmatrix} x & \\ & x\end{smallmatrix}\)\bmod \p^n$, which means that $g\in T(n)$. The other inclusions are clear.
	
	To prove the last assertion we use the isomorphism \eqref{eq:T_p-and-L}. Because $\o\ti$ is compact and $\{\lambda(1+\mathfrak{P}^n)\cap\o_L\ti: n\in\N\}$ gives a set of neighbourhoods of each $\lambda\in\o\ti$ in $\o_L\ti$, so if $\Lambda$ is trivial on $\o\ti$, it must be trivial on $\{ \lambda(1+\mathfrak{P}^n)\cap\o_L\ti\colon\lambda\in\o\ti\}$ for $n$ big enough. 
\end{proof}

\begin{dfn}\label{def:c(Lambda)}
	The smallest integer $n$ for which $\Lambda$ is $T(n)$-invariant or, equivalently,
	$$\min\{ n\geq 0:\Lambda|_{\o\ti(1+\mathfrak{P}^n)\cap\o_L\ti} =1\}$$
	will be denoted by $c(\Lambda)$. 
\end{dfn}
\begin{mytable}[source: \cite{PS1}; but see also 
\cite{RS}, Theorem 6.2.2]\label{table2} The Bessel models of the irreducible, admissible representations of $\GSp_4(F)$ that can be obtained via induction from the Borel subgroup. For a detailed description of the representations see \cite[Section 2.2]{NF}. The pairs of characters $(\chi_1,\chi_2)$ in the $L=F\oplus F$ column for types IIIb and IVc refer to the characters of $T=\{\mathrm{diag} (a,b,b,a):a,b\in F\ti\}$ given by $\mathrm{diag} (a,b,b,a)\mapsto\chi_1(a)\chi_2(b)$.
\end{mytable}\vspace{-0.3cm}
\small{
\arraycolsep=4pt 
\thickmuskip = 2mu 
\begin{equation*}
 \begin{array}{rlccc}
  \toprule
   \text{Type}&&\text{representation}&
  \multicolumn{2}{c}{(\Lambda,\theta)\text{-Bessel functional exists exactly for \ldots}}
  \\
  \cmidrule{4-5}
  &&&L=F\oplus F&L/F\text{ a field extension}\\
  \toprule
   {\rm I}&& \chi_1 \times \chi_2 \rtimes \sigma\ \mathrm{(irreducible)}&\text{all }\Lambda&\text{all }\Lambda\\
  \midrule
   {\rm II}&{\rm a}&\chi \mathrm{St}_{\GL_2}\rtimes\sigma&\text{all }\Lambda&\Lambda\neq(\chi\sigma)\circ\Norm\\
  \cmidrule{2-5}
   &{\rm b}&\chi \triv_{\GL_2}\rtimes\sigma&\Lambda=(\chi\sigma)\circ\Norm&\Lambda=(\chi\sigma)\circ\Norm\\
  \midrule
  {\rm III}&{\rm a}&\chi \rtimes \sigma \mathrm{St}_{\GSp_2}&\text{all }\Lambda
   &\text{all }\Lambda\\
  \cmidrule{2-5}
   &{\rm b}&\chi \rtimes \sigma \triv_{\GSp_2}&\Lambda\in\{(\chi\sigma,\sigma),(\sigma,\chi\sigma)\}&\text{---}
   \\
  \midrule
   {\rm IV}&{\rm a}&\sigma\mathrm{St}_{\GSp_4}&\text{all }\Lambda&
  \Lambda\neq\sigma\circ\Norm\\
  \cmidrule{2-5}
   &{\rm b}&L(\nu^2,\nu^{-1}\sigma\mathrm{St}_{\GSp_2})&\Lambda=\sigma\circ\Norm
   &\Lambda=\sigma\circ\Norm\\
  \cmidrule{2-5}
   &{\rm c}&L(\nu^{3/2}\mathrm{St}_{\GL_2},\nu^{-3/2}\sigma)&\Lambda=(\nu^{\pm1}\sigma,\nu^{\mp1}\sigma)&\text{---}
\\
  \cmidrule{2-5}
   &{\rm d}&\sigma\triv_{\GSp_4}&\text{---}&\text{---}\\
  \midrule
   {\rm V}&{\rm a}&\delta([\xi,\nu \xi], \nu^{-1/2} \sigma)&\text{all }\Lambda
   &\Lambda\neq\sigma\circ\Norm,\,\Lambda\neq(\xi\sigma)\circ\Norm\\
  \cmidrule{2-5}
   &{\rm b}&L(\nu^{1/2}\xi\mathrm{St}_{\GL_2},\nu^{-1/2} \sigma)&\Lambda=\sigma\circ\Norm
   &\Lambda=\sigma\circ\Norm,\,\Lambda\neq(\xi\sigma)\circ\Norm\\
  \cmidrule{2-5}
   &{\rm c}&L(\nu^{1/2}\xi\mathrm{St}_{\GL_2},\xi\nu^{-1/2}\sigma)&\Lambda=(\xi\sigma)\circ\Norm
   &\Lambda\neq\sigma\circ\Norm,\,\Lambda=(\xi\sigma)\circ\Norm\\
  \cmidrule{2-5}
   &{\rm d}&L(\nu\xi,\xi\rtimes\nu^{-1/2}\sigma)&\text{---}
   &\Lambda=\sigma\circ\Norm,\,\Lambda=(\xi\sigma)\circ\Norm\\
  \midrule
   {\rm VI}&{\rm a}&\tau(S,\nu^{-1/2}\sigma)&\text{all }\Lambda
   &\Lambda\neq\sigma\circ\Norm\\
  \cmidrule{2-5}
  &{\rm b}&\tau(T,\nu^{-1/2}\sigma)&\text{---}&\Lambda=\sigma\circ\Norm\\
  \cmidrule{2-5}
   &{\rm c}&L(\nu^{1/2}\mathrm{St}_{\GL_2},\nu^{-1/2}\sigma)&\Lambda=\sigma\circ\Norm&\text{---}
   \\
  \cmidrule{2-5}
   &{\rm d}&L(\nu,1_{F^\times}\rtimes\nu^{-1/2}\sigma)&
   \Lambda=\sigma\circ\Norm&\text{---}\\
  \bottomrule
	\end{array}
\end{equation*}
}\normalsize
\newline

Under the assumptions \ref{as:pi}-\ref{as:Lambda}, $\pi$ has a local Bessel model of type $(\Lambda, \theta)$ for $\Lambda$ as specified in Table \ref{table2} above. For example, if $\pi$ is an irreducible spherical principal series representation (type I), such a local Bessel model exists for \emph{all} $\Lambda$. This model contains a unique (up to multiples) $G(\o)$-invariant vector, which we will denote by $B^{(0)}_{\pi}(\Lambda, \theta)$ or by $B^{(0)}_{\pi}$. Thanks to Sugano, we have an explicit formula for the values of $B^{(0)}_{\pi}$ at
$$h(l,m):=\begin{pmatrix}\varpi^{l+2m}&&&\\&\varpi^{l+m}&&\\&&1&\\&&&\varpi^{m}
\end{pmatrix},\qquad l, m\in\Z, m \geq 0.$$

\begin{thm}[Sugano; \cite{sug}]\label{thm:Sugano} Assume \ref{as:pi}-\ref{as:Lambda}, and let $\Lambda$ be such that the local Bessel model $\mathcal{B}^{\pi}_{\Lambda, \theta}$ exists. Then 
\begin{enumerate}
\item $B^{(0)}_{\pi}(h(l,m)) = 0$ if $l<0$ or $m < c(\Lambda)$.
\item $B^{(0)}_{\pi}(h(0,c(\Lambda))) \neq 0.$
\end{enumerate}
Moreover, 
$$\sum_{l, m \ge 0}B^{(0)}_{\pi}(h(l,m))x^my^l = \frac{H(x,y)}{P(x)Q(y)} ,$$ where 
$P(x), Q(y)$ and $H(x,y)$ are (explicit) polynomials depending on $\a, \b, L, F, \Lambda$.
\end{thm}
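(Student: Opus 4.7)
The plan is to exploit the three main tools available: the Bessel transformation property under $R(F)$, the right $G(\o)$-invariance of $B^{(0)}_{\pi}$, and the spherical Hecke algebra action. The starting observation is that by a Cartan-style decomposition
$$G(F)=\bigsqcup_{l\in\Z,\, m\geq 0} R(F)\,h(l,m)\,G(\o),$$
the function $B^{(0)}_{\pi}$ is determined by the values $B^{(0)}_{\pi}(h(l,m))$. So parts (1) and (2) are support statements for this discrete set of values, while (3) asks for an explicit rational generating function.

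For part (1), I would handle the two vanishing conditions by separate symmetry arguments. For $l<0$, I use the $U$-action: I want to find $X\in M_2^{\mathrm{sym}}(F)$ such that the conjugate $h(l,m)^{-1}u(X)h(l,m)$ lies in $G(\o)$ while $\theta(u(X))\ne 1$. A direct computation shows this conjugate equals $u(\mu A^{-1}XA^{-1})$, where $A=\mathrm{diag}(\varpi^{l+2m},\varpi^{l+m})$ and $\mu=\varpi^{l+2m}$, so the condition on $X=(x_{ij})$ is $x_{11}\in\p^{l+2m}$, $x_{12}\in\p^{l+m}$, $x_{22}\in\p^l$. For $l<0$ choose $X=\mathrm{diag}(0,\varpi^l)$; then $\theta(u(X))=\psi(c\varpi^l)$ and since $c\in\o\ti$ and $\psi$ has conductor $\o$, this is non-trivial, forcing the vanishing $B^{(0)}_{\pi}(h(l,m))=0$. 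For $m<c(\Lambda)$, I instead use the $T$-action: I check that $h(l,m)^{-1}\tilde t\, h(l,m)\in G(\o)$ precisely when $t\in T(m)$ (the off-diagonal constraint $t_{12}\in\p^m$ coming from the description of $\o_L$ in the proof of Lemma \ref{lem:T_p-and-L}). By definition of $c(\Lambda)$, there exists $t\in T(m)$ with $\Lambda(t)\ne 1$, which gives $B^{(0)}_{\pi}(h(l,m))=\Lambda(t)B^{(0)}_{\pi}(h(l,m))=0$.

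For part (2), the idea is that the vanishing in (1) is optimal, as otherwise uniqueness of the spherical vector would be violated. More concretely, one shows that for every $m\geq c(\Lambda)$ the value $B^{(0)}_{\pi}(h(0,m))$ is a non-zero multiple of $B^{(0)}_{\pi}(h(0,c(\Lambda)))$ via the action of a suitable Hecke operator, and then one uses that if $B^{(0)}_{\pi}(h(0,c(\Lambda)))$ were zero, then $B^{(0)}_{\pi}$ would vanish on the entire support allowed by (1), contradicting the non-zeroness of the spherical vector in $\mathcal{B}^{\pi}_{\Lambda,\theta}$. Alternatively one can argue directly by evaluating an explicit zeta-integral / unramified matrix coefficient at $h(0,c(\Lambda))$ and checking it is non-zero.

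For part (3), the strategy is to exploit that the spherical vector is an eigenvector of the entire unramified Hecke algebra with eigenvalues expressible in terms of the Satake parameters $\alpha,\beta$. Choosing two generators of this algebra and decomposing their action on $h(l,m)$ into double cosets $R(F)h(l',m')G(\o)$, one obtains a pair of linear recursions for the sequence $\{B^{(0)}_{\pi}(h(l,m))\}$ with coefficients depending on $\alpha,\beta,\Lambda,L/F$. These recursions translate into the identity
$$P(x)Q(y)\sum_{l,m\geq 0}B^{(0)}_{\pi}(h(l,m))x^{m}y^{l}=H(x,y),$$
where $P(x)$, $Q(y)$ are the characteristic polynomials arising from the two Hecke operators and $H(x,y)$ is a polynomial whose coefficients encode the initial values obtained from (1) and (2). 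The main obstacle I expect is part (3): the explicit determination of $H(x,y)$ requires a careful case analysis depending on whether $L/F$ is split, inert, or ramified (these cases correspond to the three values of $\bigl(\tfrac{L}{\p}\bigr)$), together with non-trivial combinatorics in decomposing the Hecke double cosets against the $R(F)$-orbit stratification — this is the technical heart of Sugano's original computation.
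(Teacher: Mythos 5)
First, a point of reference: the paper does not prove this theorem at all --- it is imported verbatim from Sugano \cite{sug} (see also the reworking in \cite{PS1}), so there is no internal proof to compare against. Judged on its own merits, your outline follows the standard route to Sugano's formula, and the parts you actually carry out are essentially right. The disjoint decomposition $G(F)=\bigsqcup_{l\in\Z,\,m\ge 0}R(F)h(l,m)G(\o)$ is indeed the correct starting point (it is due to Furusawa/Sugano and should be cited rather than asserted). Your support argument for part (1) is the standard one and both conjugation computations check out: for $l<0$ the congruence conditions $x_{11}\in\p^{l+2m}$, $x_{12}\in\p^{l+m}$, $x_{22}\in\p^{l}$ are correct, and for $m<c(\Lambda)$ the identification of the stabilizer with $T(m)$ follows exactly from the description $\o_L=\o+\o\xi_0$ with $c\in\o\ti$ used in Lemma \ref{lem:T_p-and-L}. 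One small repair in the $l<0$ case: $\psi(c\varpi^{l})$ need not be $\ne 1$ for your specific choice $X=\mathrm{diag}(0,\varpi^{l})$ when the residue field is not prime; since $c\varpi^{l}\notin\o$, you should instead pick a unit $u$ with $\psi(uc\varpi^{l})\ne 1$ and take $X=\mathrm{diag}(0,u\varpi^{l})$, which satisfies the same integrality constraints.

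The genuine gaps are in parts (2) and (3), and they are not independent in the way your write-up suggests. As stated, part (2) presupposes that each $B^{(0)}_{\pi}(h(0,m))$ is a \emph{non-zero} multiple of $B^{(0)}_{\pi}(h(0,c(\Lambda)))$ ``via a suitable Hecke operator''; this is not a soft fact but precisely the output of the explicit recursion analysis of part (3), so the argument is circular unless (3) is done first. The clean logical order is: establish the two Hecke recursions, show (using the support constraints from (1)) that they determine all values $B^{(0)}_{\pi}(h(l,m))$ from the single value $B^{(0)}_{\pi}(h(0,c(\Lambda)))$, and then conclude (2) because otherwise $B^{(0)}_{\pi}$ would vanish on every double coset and hence identically, contradicting uniqueness/non-triviality of the spherical vector. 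Part (3) itself --- the double coset decompositions of the two Hecke operators against the $R(F)\backslash G(F)/G(\o)$ stratification, the case split on $\(\frac{L}{\p}\)\in\{-1,0,1\}$, and the explicit polynomials $P$, $Q$, $H$ --- is correctly identified as the technical heart but is only a plan, not a proof. Since the paper itself treats this as a black box, that is a reasonable stopping point, but be aware that everything quantitative in the theorem lives in the part you have not done.
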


Because of the above theorem, if the local Bessel model $\mathcal{B}^{\pi}_{\Lambda, \theta}$ exists, we can and will henceforth normalize $B^{(0)}_{\pi}$ so that $B^{(0)}_{\pi}(h(0,c(\Lambda))) = 1$. 

\subsection{Global Bessel models for $\GSp_4$} Let $d$ be a fundamental discriminant and 
$$S(d) =\mat{a}{b/2}{b/2}{c} := \begin{cases} \begin{pmatrix}
\frac{-d}{4} & 0\\
0 & 1\\\end{pmatrix} & \text{ if } d\equiv 0\pmod{4} \\[4ex]
\begin{pmatrix} \frac{1-d}{4} & \frac12\\\frac12 & 1\\
\end{pmatrix} & \text{ if } d\equiv 1\pmod{4}\end{cases}.$$
Let groups $T, U, R$ be as above and let $\psi$ be a fixed non-trivial character of $\A /\Q$. We define the character $\theta = \theta_S$ on $U(\A)$ by $\theta(u(X))=\psi(\tr(SX))$. Let $\Lambda$ be a character of $T(\A) / T(\Q)$ such that $\Lambda |_{\A^\times}= 1$, and denote by $\Lambda \otimes \theta$ the character of $R(\A)$.

Let $\pi$ be an irreducible automorphic cuspidal representation of $G(\A)$ with trivial central character and $V_\pi$ be its space of automorphic
forms. For $\Phi \in V_\pi$, we define a function $B_\Phi$ on $G(\A)$ by
$$B_\Phi(g) =\int_{\A^\times R(\Q)\back R(\A)} (\Lambda \otimes \theta)(r)^{-1}\Phi (rg)\,dr.$$

The $\C$-vector space of functions on $G(\A)$ spanned by $\{B_\Phi :\Phi \in V_\pi \}$ is called the \emph{global Bessel space} of type $(\Lambda, \theta)$ for $\pi$, and its vectors are called \emph{Bessel periods}; it is invariant under the regular action of $G(\A)$, and when the space is non-zero, the corresponding representation is a model of $\pi$, which we call a \emph{global Bessel model} of type $(\Lambda, \theta)$.

For $\Phi\in V_\pi$ and a symmetric matrix $S\in M_2(\Q)$, we define the Fourier coefficient
\begin{equation}\label{fouriercoefficientdefeq}
\Phi_{S,\psi}(g)=\int\limits_{M_2^{\rm sym}(\Q)\backslash M_2^{\rm sym}(\A)}\psi^{-1}({\tr}(SX))\Phi(\mat{1_2}{X}{}{1_2}g)\,dX,\qquad g\in G(\A).
\end{equation}
For brevity we will often shorten $\Phi_{S,\psi}$ to $\Phi_{S}$.\\

\section{Ray class groups and $\G^0(N)$-equivalence}\label{sec:ray_class_groups}
In the next section we will describe a relation between Fourier coefficients of Siegel modular forms of degree $2$ and values of Bessel periods at the matrices similar to $h_p(l,m)$. As we shall see, these values are equal to sums of Fourier coefficients indexed via elements of a ray class group. The question is whether all the coefficients of fixed content and discriminant, up to equivalence, occur in such sum. It will be the subject of this section.

Recall that each coefficient $a(F,T)$ may be characterised according to content and discriminant of $T$, and each discriminant may be written as $dM^2L^2$, where $d$ is a \emph{fundamental discriminant}\footnote{Recall that $d$ is a fundamental discriminant if $d$ is square-free and $d\equiv 1\pmod 4$ or $d=4d'$, $d'$ square-free and $d'\equiv 2,3\pmod 4$. Or, equivalently, if $d=1$ or $d$ is the discriminant of a quadratic number field.} and $L$ content of the matrix $T$. From now on $d$ will denote a negative fundamental discriminant. 

In view of the relation \eqref{eq:G^0(N)-equiv_of_Fourier_coeff} satisfied by Fourier coefficients of $F$, we define  the set 
$$H(dM^2,L;\G^0(N)):=\{ T\in \mathcal{P}_2\colon \disc T=dL^2M^2, \cont T =L\} /\sim\, ,$$
where 
$$T\sim T'\qquad\Longleftrightarrow\qquad T'=\T{A}TA\quad\mbox{for some } A\in\G^0(N).$$

It is well-known that when $M=N=1$, the set $H(d,L;\G^0(1))$ is isomorphic to the ideal class group of $\Q(\sqrt{d})$. As we shall see, when $M,N>1$ the situation is more complicated. In \cite{PSSc1}, Pitale, Saha and Schmidt found a bijection between $H(dM^2,L;\G^0(1))$ and a certain ray class group of $\Q(\sqrt{d})$, which we will call later $\Cl_d(M)$. We are going to extend their result to $N>1$.

\subsection{Construction of an endomorphism} 
Fix positive integers $M, N$ and a negative fundamental discriminant $d$. Let
$S(d)$ and $T=T_{S(d)}$ be as in section \ref{loc-glob}. 

\begin{dfn}\label{def:T_N} For $N=\prod_p p^{n_p}$ define
	$$T_N:=\prod_{p<\infty} T(n_p)\quad\mbox{and}\quad\Cl_d(N):=T(\A )/T(\Q )T(\R )T_N\, ,$$
	where $T(n_p)\subseteq T(\Q_p)$ is as in section \ref{sec:Sugano} and by $T(0)$ we mean the maximal compact subgroup $T(\Z_p):= T(\Q_p) \cap \GL_2(\Z_p)$ of $T(\Q_p)$.
\end{dfn}
\noindent Because of the isomorphism described in Lemma \ref{lem:T_p-and-L}, we may view $\Cl_d(N)$ as a \emph{ray class group} of $\Q(\sqrt{d})$.

Basing on the argument of \cite{PSSc1}, we will now describe a certain map from $\Cl_d(N')$ to $H(dM^2,L;\G^0(N))$, where $N'$ is any integer divisible by $MN$.

Let $c \in\Cl_d(N')$ and let $t_{c}\in T(\A )$ be a representative for $c$ such that $t_{c}\in\prod_{p<\infty} T(\Q_p)$. By strong approximation we can write $t_{c}=\gamma_{c} m_{c}\kappa_{c}$, where $\gamma_{c}\in \GL_2(\Q )$, $m_{c}\in\GL_2(\R )^+$ and $\kappa_{c}\in K^*_{N'}$. Also, denote by $(\gamma_{c})_f$ the finite part of $\gamma_{c}$ when considered as an element of $\GL_2(\A )$, thus we have the equality $(\gamma_{c})_f=\gamma_{c} m_{c}$, as elements of $\GL_2(\A )$. Let
\begin{equation}\label{S_c} S_{c}:=\det (\gamma_{c})^{-1} \T{\gamma_{c}} S(d)\gamma_{c}\, ;\end{equation}
it is a positive definite, half-integral, symmetric matrix of discriminant $d$ and content $1$ (cf. \cite{fur}, p. 209). Put 
\begin{equation}\label{def:phi_{L,M}}
\phi_{L,M}(c) = L\(\begin{smallmatrix} M &  \\  & 1\\\end{smallmatrix}\)S_{c}\(\begin{smallmatrix} M &  \\  & 1\\\end{smallmatrix}\)\, .
\end{equation} 
Then $\phi_{L,M}(c)$ is a matrix of discriminant $dM^2L^2$ and content $L$.

Note that the matrices $\phi_{L,M}(c)$ constructed above are not uniquely defined, as they depend on the choice of $t_c$ and $\kappa_{c}$. However, the definition is correct for $\G^0(N)$-equivalence classes.

\begin{prop}\label{phi_injective}
	Assume that $MN|N'$. Then the map $\tilde{\phi}_{L,M}=\tilde{\phi}_{L,M;N'}$ from $\Cl_d(N')$ to $H(dM^2,L;\G^0(N))$, sending $c$ to $\phi_{L,M}(c)$
	is well-defined. Moreover, if $N'=MN$, $\tilde{\phi}_{L,M;N'}$ is injective.
\end{prop}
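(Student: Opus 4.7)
The plan splits into \emph{well-definedness} and \emph{injectivity}.

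For well-definedness I would verify independence of the three choices entering the definition of $\phi_{L,M}(c)$. First, left-multiplying $\gamma_c$ by any $\alpha\in T(\Q)$ leaves $S_c$ unchanged, since by the very definition of $T=T_{S(d)}$ one has $\T{\alpha}S(d)\alpha=\det(\alpha)S(d)$, which cancels against the factor $\det(\gamma_c)^{-1}$. Second, two decompositions $t_c=\gamma_cm_c\kappa_c=\gamma_c'm_c'\kappa_c'$ are related by an element $g=\gamma_c^{-1}\gamma_c'\in\GL_2(\Q)\cap\bigl(\GL_2(\R)^+\cdot K^*_{N'}\bigr)$, which is necessarily a matrix in $\SL_2(\Z)$ with both off-diagonal entries divisible by $N'$, hence in $\G^0(N')$; it transforms $S_c$ into $\T{g}S_cg$. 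Third, modifying $t_c$ by an element of $T(\Q)T(\R)T_{N'}$ changes $\gamma_c$ on the left by $T(\Q)$ (harmless by the first point) and $\kappa_c$ on the right by $T_{N'}\subseteq K^*_{N'}$ (legitimate by Lemma \ref{lem:T_p-and-L}), while the $T(\R)$-component is absorbed into $m_c$. Together these show $S_c$ is well-defined modulo $\G^0(N')$-equivalence. To descend to $\G^0(N)$-equivalence of $\phi_{L,M}(c)=L\,\mathrm{diag}(M,1)S_c\,\mathrm{diag}(M,1)$, I set $\widetilde{B}:=\mathrm{diag}(M^{-1},1)B\,\mathrm{diag}(M,1)=\mat{a}{b/M}{M\gamma}{d}$ for $B=\mat{a}{b}{\gamma}{d}\in\G^0(N')$; then $b/M\in(N'/M)\Z\subseteq N\Z$ because $MN\mid N'$, so $\widetilde{B}\in\G^0(N)$.

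For the injectivity assertion with $N'=MN$, suppose $A=\mat{a}{b}{\gamma}{d}\in\G^0(N)$ satisfies $\T{A}\,\mathrm{diag}(M,1)S_c\,\mathrm{diag}(M,1)\,A=\mathrm{diag}(M,1)S_{c'}\,\mathrm{diag}(M,1)$. Setting $B:=\mathrm{diag}(M,1)A\,\mathrm{diag}(M^{-1},1)=\mat{a}{Mb}{\gamma/M}{d}$ rewrites this as $\T{B}S_cB=S_{c'}$; the crux is to show $B\in\G^0(MN)=\G^0(N')$. Using the normalisation \eqref{conditions_on_S} together with \cite[Lemma 1.1]{PS1long} at each prime $p\mid M$, I would choose representatives of $S_c,S_{c'}$ within their $\G^0(N')$-classes whose bottom-right entries $\delta,\delta'$ are coprime to $M$. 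Expanding the $(1,1)$-entry of the matrix equation yields
$$a^2\alpha M^2+a\gamma\beta M+\gamma^2\delta=\alpha'M^2\,,$$
so $\gamma^2\delta\equiv 0\pmod{M^2}$, and since $\gcd(\delta,M)=1$ this forces $M\mid\gamma$. Combined with $b\in N\Z$ (from $A\in\G^0(N)$), this gives $\gamma/M\in\Z$ and $Mb\in MN\Z$, so $B\in\G^0(N')$. Since $S_c,S_{c'}$ are primitive of fundamental discriminant $d$, the $M=L=1$ specialisation of the construction --- the classical bijection $\Cl_d(N')\cong H(d,1;\G^0(N'))$ --- then forces $c=c'$ in $\Cl_d(N')$.

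The \textbf{main obstacle} is the divisibility $M\mid\gamma$: without it, $B$ is not even integral, let alone a congruence-subgroup element. Securing it requires normalising $S_c,S_{c'}$ locally at every prime dividing $M$ to have unit bottom-right entries, a manipulation not visible from the bare $\G^0(N)$-equivalence hypothesis. The remaining reduction --- turning the $\G^0(N')$-equivalence $\T{B}S_cB=S_{c'}$ into the identity $c=c'$ in $\Cl_d(N')$ --- is a standard ray class group statement one can take as a black box.
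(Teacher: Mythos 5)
Your skeleton matches the paper's: conjugate the hypothesised $\G^0(N)$-equivalence by $\mathrm{diag}(M,1)$, show the resulting matrix $B$ lands in $\G^0(MN)=\G^0(N')$, and then convert the equivalence $\T{B}S_cB=S_{c'}$ of the primitive matrices into $c=c'$ in $\Cl_d(N')$. Your well-definedness argument is fine, and is in fact more explicit than the paper's, which simply defers to the proof of Proposition 5.3 of \cite{PSSc1}. Two steps of your injectivity argument, however, have real problems. The deduction ``$a^2\alpha M^2+a\gamma\beta M+\gamma^2\delta=\alpha'M^2$, so $\gamma^2\delta\equiv0\pmod{M^2}$'' is false as written: the cross term $a\gamma\beta M$ is only divisible by $M$, so it cannot be dropped modulo $M^2$. (Reducing only mod $M$ gives $M\mid\gamma^2$, which does not yield $M\mid\gamma$ for non-squarefree $M$.) The conclusion $M\mid\gamma$ is still correct, but it needs a prime-by-prime valuation argument: for $p^k\parallel M$ and $j=\ord_p\gamma$, if $j<k$ then $\ord_p(a\gamma\beta M)\ge j+k>2j=\ord_p(\gamma^2\delta)$, so the left-hand side has $p$-valuation exactly $2j<2k$, contradicting divisibility by $p^{2k}$. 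Relatedly, the input $\gcd(\delta,M)=1$ cannot be arranged by re-choosing representatives inside the $\G^0(N')$-class: for $p\mid N'$ the $(2,2)$-entry of $\T{g}S_cg$ with $g\in\G^0(N')$ is congruent to $\delta w^2$ mod $p$, so its class modulo unit squares is invariant. The correct justification is that the construction already forces it: since $(\gamma_c)_p=t_{c,p}\kappa_{c,p}^{-1}$ with $\kappa_{c,p}\in K^*_{N',p}$ diagonal mod $p$, the $(2,2)$-entry of $S_c$ is a unit multiple of $S(d)(0,v)=v^2$ mod $p$ for every $p\mid N'$, hence coprime to $M$.

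The second problem is the closing step. The ``classical bijection $\Cl_d(N')\cong H(d,1;\G^0(N'))$'' is not available as a black box: by Proposition \ref{phi_surjective} the map is not surjective in general, and its injectivity is exactly the $M=L=1$ case of the statement you are proving, so the appeal is circular. This is precisely where the paper does the remaining work, adelically: from $\T{B}S_cB=S_{c'}$ one gets $\gamma_{c'}B\gamma_c^{-1}\in T(\Q)$, its finite part equals $t_{c'}(\kappa_{c'}^{-1}B\kappa_c)t_c^{-1}$ with $\kappa_{c'}^{-1}B\kappa_c\in K^0_{MN}$, and Lemma \ref{lem:T_p-and-L} identifies $T(\Q_p)\cap K^0_{MN}$ locally with $T(\ord_p(MN))$, whence $t_c$ and $t_{c'}$ represent the same class in $\Cl_d(MN)$. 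Some version of this computation must be supplied; it is also the point at which the hypothesis $N'=MN$ genuinely enters.
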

\begin{proof}
	This follows almost immediately from the proof of Proposition 5.3, \cite{PSSc1}. The first part goes without any change. To show injectivity, it suffices to exchange a group $\SL_2(\Z )$ occurring in the second part of the proof with $\G^0(N)$. More precisely, if we assume that there exists a matrix $A\in\G^0(N)$ such that $\T{A}\phi_{L,M}(c_2)A=\phi_{L,M}(c_1)$, then $A$ must be, in fact, an element of $\G^0(N)\cap\G_0(M)$. Observe that $\T{R}S_{c_2}R=S_{c_1}$ for $R=\(\begin{smallmatrix} M & \\ & 1\end{smallmatrix}\) A \(\begin{smallmatrix} 1/M & \\ & 1\end{smallmatrix}\)\in\G^0(MN)$, and so if $\gamma_1, \gamma_2$ correspond to $S_{c_1}, S_{c_2}$ via \eqref{S_c}, then $\gamma_2 R\gamma_1^{-1}\in T(\Q )$. Therefore, if we take $t_1=\gamma_1\gamma_{1,\infty}^{-1}\kappa_1$ and $t_2=\gamma_2\gamma_{2,\infty}^{-1}\kappa_2$ as representatives in $\prod_{p<\infty} T(\Q_p)$ of $c_1$ and $c_2$, then 
	$$\gamma_2 R\gamma_1^{-1}\in T(\Q )\cap t_2\GL_2(\R )^+ K_{MN}^0t_1^{-1}\, ,$$
where 
$$K^0_{MN}:=\prod_{p<\infty} \{ g\in\GL_2(\Z_p)\colon g= \mat{*}{}{\ast}{*} \bmod{p^{\ord_p(MN)}}\}\, .$$
This means (recall Lemma \ref{lem:T_p-and-L}) that $t_1$ and $t_2$ represent the same element in $\Cl_d(N')$, provided $N'=MN$.
\end{proof}

\subsection{The image of $\tilde{\phi}_{L,M}$}
We start with the observation that the image of the map $\tilde\phi_{L,M}$ from $\Cl_d(N')$ to $H(dM^2,L;\G^0(N))$ constructed above does not depend on $N'$, but only on $MN$.

\begin{lemma}Let $MN |N_1' | N_2'$, and let $\rho\colon\Cl_d(N_2')\to\Cl_d(N_1')$ be the natural projection. Then the following diagram is commutative:
	\begin{center}
		{\tiny
			\begin{tikzpicture}
			\matrix (m) [matrix of math nodes,row sep=3em,column sep=4em,minimum width=2em]
			{
				\Cl_d(N_2') & & \Cl_d(N_1')\\
				& & \\
				& H(dM^2,L;\G^0(N)) & \\};
			\path[-stealth]
			(m-1-1) edge node [left] {$\tilde{\phi}_{L,M;N_2'}$} (m-3-2)
			edge node [above] {$\rho$} (m-1-3)
			(m-1-3) edge node [right] {$\tilde{\phi}_{L,M;N_1'}$} (m-3-2);
			\end{tikzpicture}
		}\end{center}
\end{lemma}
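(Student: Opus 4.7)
The plan is to exhibit a single representative $t_c$ and a single strong-approximation decomposition that serve simultaneously to compute both $\tilde{\phi}_{L,M;N_1'}$ and $\tilde{\phi}_{L,M;N_2'}$; commutativity will then be immediate because the outputs coincide literally as matrices, not merely as $\G^0(N)$-equivalence classes.

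I would proceed in three short steps. First, verify the inclusions $T_{N_2'}\subseteq T_{N_1'}$ and $K^*_{N_2'}\subseteq K^*_{N_1'}$, both of which follow instantly from $N_1'|N_2'$ because enlarging the modulus only tightens the defining congruences in Definition \ref{def:T_N} and in the definition of $K^*_N$. The first inclusion shows in particular that the projection $\rho$ is well-defined and that any $t_c\in\prod_{p<\infty}T(\Q_p)$ representing a class $c\in\Cl_d(N_2')$ also represents $\rho(c)\in\Cl_d(N_1')$.

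Second, fix such a $t_c$ and invoke strong approximation to decompose $t_c=\gamma_c m_c\kappa_c$ with $\gamma_c\in\GL_2(\Q)$, $m_c\in\GL_2(\R)^+$, and $\kappa_c\in K^*_{N_2'}$. Since $K^*_{N_2'}\subseteq K^*_{N_1'}$, the very same decomposition is admissible in the computation of $\tilde{\phi}_{L,M;N_1'}(\rho(c))$. Consequently the matrix $S_c$ built via \eqref{S_c} and hence
$$\phi_{L,M}(c)=L\(\begin{smallmatrix} M & \\ & 1\end{smallmatrix}\) S_c\(\begin{smallmatrix} M & \\ & 1\end{smallmatrix}\)$$
are literally the same matrix when evaluating either of the two compositions in the diagram. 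Passing to $\G^0(N)$-equivalence classes then gives $\tilde{\phi}_{L,M;N_1'}(\rho(c))=\tilde{\phi}_{L,M;N_2'}(c)$.

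There is essentially no obstacle here: Proposition \ref{phi_injective} already guarantees that different choices of $t_c$ and of its decomposition produce $\G^0(N)$-equivalent matrices, so one is free to make compatible choices on the two sides of the diagram. The only thing to take care of is not reversing the direction of the inclusions $T_{N_2'}\subseteq T_{N_1'}$ and $K^*_{N_2'}\subseteq K^*_{N_1'}$, which is immediate from $N_1'|N_2'$.
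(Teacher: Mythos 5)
Your argument is correct. It takes a genuinely more economical route than the paper's own proof, and the difference is worth noting. The paper fixes a class $c\in\Cl_d(N_1')$, enumerates its fiber $\rho^{-1}(c)=\{c_1,\dots,c_t\}$, and for each pair $c_i,c_j$ explicitly manufactures a matrix $\gamma\in\G^0(N)$ realizing the equivalence $\T{\gamma}\phi_{L,M}(c_j)\gamma=\phi_{L,M}(c_i)$, by comparing the local components $(\gamma_{c_i})_p$ and $(\gamma_{c_j})_p$ prime by prime and showing $\gamma_{c_j}^{-1}\gamma_{c_i}\in\G^0(MN)$; in effect it re-proves, by hand, that $\tilde{\phi}_{L,M;N_2'}$ is constant on the fibers of $\rho$. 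You instead observe that the inclusions $T_{N_2'}\subseteq T_{N_1'}$ and $K^*_{N_2'}\subseteq K^*_{N_1'}$ let a single representative $t_c$ and a single strong-approximation decomposition serve at both levels, so that the two compositions produce the \emph{identical} matrix $\phi_{L,M}(c)$, and you then delegate all independence-of-choices questions to the well-definedness part of Proposition \ref{phi_injective} (which applies to both $\tilde{\phi}_{L,M;N_1'}$ and $\tilde{\phi}_{L,M;N_2'}$ since $MN\mid N_1'\mid N_2'$). This is a legitimate and cleaner division of labour: the paper's version is self-contained and exhibits the equivalence explicitly, while yours is shorter precisely because the only nontrivial content of the lemma --- that different admissible choices give $\G^0(N)$-equivalent outputs --- has already been established. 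No gap.
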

\begin{proof}
	This follows by construction. Let $c \in \Cl_d(N_1')$ and $c_1, c_2,\ldots ,c_t$ be the elements of $\Cl_d(N_2')$ that the map $\rho$ sends to $c$. Choose distinct $i,j\in\{ 1,2,\ldots ,t\}$. We will show that $\phi_{L,M}(c_i)$ and $\phi_{L,M}(c_j)$ are $\G^0(N)$-equivalent. For this it suffices to find $\gamma\in\G^0(N)$ such that $\gamma_{c_i}\(\begin{smallmatrix} M & \\ & 1\end{smallmatrix}\) =\gamma_{c_j}\(\begin{smallmatrix} M & \\ & 1\end{smallmatrix}\)\gamma$. Denote by $(\gamma_{c_i})_p$ the image of $\gamma_{c_i}$ in $\GL_2(\Z_p)$, when embedded diagonally. Since $c_i,c_j$ map to $c$, $(\gamma_{c_i})_p T(\ord_p(N_1'))=(\gamma_{c_j})_p T(\ord_p(N_1'))$ for all primes $p|N_1'$ or $p\nmid {N_2'\over N_1'}$. Hence, for each of those primes there exists $g_p\in T(\ord_p(N_1'))$ such that $(\gamma_{c_i})_p =(\gamma_{c_j})_p g_p$. Note that we can choose $\gamma_{c_i}$ and $\gamma_{c_j}$ in such a way that $\gamma_{c_i} T(\Q)T(\R)\prod_p T(\Z_p)=\gamma_{c_j} T(\Q)T(\R)\prod_p T(\Z_p)$. Hence, for primes $p| {N_2'\over N_1'}$, $g_p:=(\gamma_{c_j}^{-1})_p(\gamma_{c_i})_p$ is still in $T(\Z_p)$. This shows that $g:=\gamma_{c_j}^{-1}\gamma_{c_i}\in\G^0(MN)$. Now it's easy to check that $\gamma:=\(\begin{smallmatrix} 1/M & \\ & 1\end{smallmatrix}\) g \(\begin{smallmatrix} M & \\ & 1\end{smallmatrix}\)$ gives a desired $\G^0(N)$-equivalence.
\end{proof}
	
Put
$$H_1(dM^2,L;\G^0(N)):=\mathrm{im}\(\tilde{\phi}_{L,M}\colon\Cl_d(MN)\to H(dM^2,L;\G^0(N))\)\, .$$
	
\begin{rem}
	The map $\tilde{\phi}_{L,M}$ from $\Cl_d(MN)$ to $H_1(dM^2,L;\G^0(N))$ is a bijection. In this way the set $H_1(dM^2,L;\G^0(N))$ acquires a natural group structure that makes it isomorphic to $\Cl_d(MN)$.  Hence if $\Lambda$ is any character of $\Cl_d(MN)$, then we can naturally think of $\Lambda$ as a character of $H_1(dM^2,L;\G^0(N))$. 
		
	In the next section we will naturally encounter sums like 
	$$\sum_{c \in \Cl_d(N')} \Lambda^{-1}(c) a(F,\phi_{L,M}(c))$$ 
	for a character $\Lambda$ of $\Cl_d(N')$. Observe that, if we denote by $\rho$ a natural projection from $\Cl_d(N')$ to $\Cl_d(MN)$, we have the following useful fact:
	\begin{multline*}
		\sum_{c \in \Cl_d(N')} \Lambda^{-1}(c) a(F,\phi_{L,M}(c))=\sum_{c\in\Cl_d(MN)} a(F,\phi_{L,M}(c))\sum_{\substack{\tilde{c}\in\Cl_d(N')\\ \rho (\tilde{c})=c}}\Lambda^{-1}(\tilde{c})\\
		= \begin{cases} 0 &\text{ if } C(\Lambda) \nmid MN \\  \frac{|\Cl_d(N')|}{|\Cl_d(MN|)} \sum_{T \in H_1(dM^2,L;\G^0(N))} \Lambda^{-1}(T) a(F,T) &\text{ if } C(\Lambda) | MN, \end{cases}
	\end{multline*}
	where $C(\Lambda):=\prod_p p^{c(\Lambda_p)}$ is the smallest integer such that $\Lambda|_{T_{C(\Lambda)}}=1$.
\end{rem}
	
Let us now try to describe more accurately the set $H_1(dM^2,L;\G^0(N))$. This will give us information on the coefficients occurring in the sum above.

\begin{lemma}\label{Lemma_on_E(S')} Suppose that $S' = \(\begin{smallmatrix}a' & b'/2\\ b'/2 & c'\end{smallmatrix}\)$ is a matrix of discriminant $dM^2L^2$ and content $L$, $\xi_{S'}=\(\begin{smallmatrix} b'/2 & c'\\ -a' & -b'/2\end{smallmatrix}\)$. Let $E(S')$ be the subgroup of $\SL_2(\Z)$ defined as 
$$E(S'):= \{g \in \SL_2(\Z): g^t S'g = S'\}\, .$$ Then 
	\begin{enumerate}\thickmuskip = 1.5mu 
		\item If $d \neq -4, -3$, or if $M>1$, then $E(S') = \{\pm 1_2\}$.
		\item If $(d,M)=(-4,1)$, then   $E(S') = \{\pm 1_2, \pm\xi_{S'}\}$. 
		\item If $(d,M)=(-3,1)$, then $E(S') = \{\pm 1_2, \pm \( {1\over 2}1_2+\xi_{S'}\) , \pm \( -{1\over 2}1_2+\xi_{S'}\)\}$.
	\end{enumerate}
\end{lemma}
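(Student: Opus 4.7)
The plan is to realize $E(S')$ as the norm-one integer elements of the quadratic order generated by $\xi_{S'}$ and then enumerate these via the resulting Diophantine equation. First, I would observe that $E(S')$ is contained in the commuting algebra $T_{S'}(\Q) = \Q(\xi_{S'})^\times$, since $g^t S' g = S'$ together with $\det g = 1$ says precisely that $g \in T_{S'}(\Q)$. Consequently every $g \in E(S')$ admits a unique decomposition $g = x \cdot 1_2 + y \cdot \xi_{S'}$ with $x, y \in \Q$. Using the identity $\xi_{S'}^2 = \frac{\disc S'}{4}\,1_2 = \frac{dM^2L^2}{4}\,1_2$, the condition $\det g = 1$ becomes
$$x^2 + \frac{|d|M^2L^2}{4}\,y^2 = 1,$$
which has finitely many solutions because $d<0$.

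Second, I would derive integrality from the explicit entries $x+yb'/2,\ yc',\ -ya',\ x-yb'/2$ of $g$. Imposing $g \in M_2(\Z)$ forces $2x \in \Z$ and $ya', yb', yc' \in \Z$ (the third obtained from $yb' = (x+yb'/2) - (x-yb'/2)$), and hence $y\cdot\gcd(a',b',c') = yL \in \Z$. Writing $y_0 := Ly$, the determinant equation becomes
$$x^2 + \frac{|d|M^2}{4}\,y_0^2 = 1,\qquad y_0 \in \Z,\ \ 2x \in \Z.$$

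Third, I would enumerate the solutions of this reduced equation. If $y_0 = 0$ then $x = \pm 1$, yielding $g = \pm 1_2$. If $y_0 \ne 0$ then $|d|M^2 \le 4$; since $d$ is a negative fundamental discriminant (so $|d|\in\{3,4,7,8,11,\ldots\}$), only $(d,M)=(-3,1)$ or $(d,M)=(-4,1)$ can contribute. For $(d,M)=(-4,1)$ one finds $y_0=\pm1$, $x=0$, giving $g = \pm\,\xi_{S'}/L$. For $(d,M)=(-3,1)$ one finds $y_0=\pm1$, $x=\pm\tfrac{1}{2}$, giving $g = \pm\tfrac{1}{2}\,1_2 \pm \xi_{S'}/L$. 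A short verification, using that $\disc S' = -4L^2$ forces $b'/L$ to be even (resp.\ $\disc S' = -3L^2$ forces $b'/L$ to be odd), shows that each candidate matrix really lies in $\SL_2(\Z)$, with the parity compatibility $2x \equiv yb' \pmod 2$ automatic in both cases.

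The main bookkeeping point, and the only genuine subtlety, is that the calculation naturally produces $\xi_{S'}/L$ rather than $\xi_{S'}$ in parts (2) and (3). The formulas in the lemma as printed therefore implicitly correspond to $L=1$, which is in fact the only setting in which $\xi_{S'}$ itself --- of determinant $-\disc S'/4 = L^2$ --- belongs to $\SL_2(\Z)$. Equivalently, one can reduce to $L=1$ at the outset by noting that $E(S')=E(S'/L)$ and applying the analysis to the primitive form $S_0 := S'/L$, for which $\xi_{S_0} = \xi_{S'}/L$; under this reduction the listed formulas hold verbatim.
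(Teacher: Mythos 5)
Your proof is correct and follows essentially the same route as the paper's: both identify $E(S')$ with the norm-one elements of $\Q(\xi_{S'})^\times\cong\Q(\sqrt{d})^\times$ that land in $M_2(\Z)$, and then enumerate the finitely many solutions of $x^2+\tfrac{|d|M^2}{4}y_0^2=1$ — your write-up simply carries out explicitly the Diophantine enumeration that the paper dismisses as ``easy to check.'' Your remark that the displayed formulas in parts (2) and (3) implicitly presuppose $L=1$ (for general content one must read $\xi_{S'}/L=\xi_{S'/L}$ in place of $\xi_{S'}$) is a correct and worthwhile clarification; the paper sidesteps this by reducing to content $1$ at the outset via $E(S')=E(S'/L)$, exactly as you suggest in your final paragraph.
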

\begin{proof} 
	Note that $E(S')=\{ g\in T_{S'}(\Q)\cap\SL_2(\Z)\colon\det g=1\}$ and it doesn't depend on the content of $S'$. We may assume then that $L=1$. A discussion at the beginning of section \ref{sec:Bessel_intro} applies also when $\disc S'=dM^2$ and $F=\Q$, i.e. there is an identification
	$$T_{S'}(\Q )=\Q(\xi_{S'})\ti\ni x+y\xi_{S'}\longmapsto x+y{\sqrt{\det \xi_{S'}}\over 2} =x+y{M\sqrt{d}\over 2}\in\Q (\sqrt{d})\, .$$
	Therefore $E(S')$ corresponds to the units of the ring of integers of $\Q (\sqrt{d})$ of the form $x+y{M\sqrt{d}\over 2}$. It is easy to check that they are of the form proposed above.
\end{proof}

\begin{prop}\label{prop_G^0(N)_repr} Suppose that $S_1,\ldots ,S_t$ are matrices which comprise a complete set of (distinct) representatives for $H(dM^2,L;\G^0(1))$, and $A_1$,..., $A_r$ form a complete set of (distinct) representatives for $\SL_2(\Z)/\Gamma^0(N)$.
	\begin{enumerate}
		\item Assume that either $d \neq -4, -3$ or $M>1$. Then $A_i^t S_j A_i$ gives a complete set of distinct representatives for $H(dM^2,L;\G^0(N))$, i.e.
		$$\hsp{0.1} |H(dM^2,L;\G^0(N))| =tr= {|\Cl_d(1)|\over u(d)} MN \prod_{p|M} (1-p^{-1} \left(\frac{d}{p} \right))\prod_{p|N} (1+1/p) ,$$
		where $u(d)=1$ if $d \neq -4, -3$, and $u(-3)=3, u(-4)=2$.
		\item Assume that $(d,M)=(-4,1)$ and let $N=2^{n_0}p_1^{n_1}\ldots p_s^{n_s}$ be the prime decomposition of $N$. Then
		$$|H(-4,L;\G^0(N))| = {1\over 2} \( N\prod_{p|N} (1+1/p) +\mathcal{L}_{-4}\) ,$$
		where
		$$\mathcal{L}_{-4} =\begin{cases} 
		2^s & \mbox{if } n_0\leq1\mbox{ and }\,\forall_{i} p_i\equiv 1\, (\!\!\!\!\!\!\mod{4})\\
		0 & \mbox{otherwise.}\end{cases}$$
		\item Assume that $(d,M)=(-3,1)$ and let $N=3^{n_0}p_1^{n_1}\ldots p_s^{n_s}$ be the prime decomposition of $N$. Then
		$$|H(-3,L;\G^0(N))| = {1\over 3} \( N \prod_{p|N} (1+1/p)+2\mathcal{L}_{-3}\) ,$$
		where 
		$$\mathcal{L}_{-3} =\begin{cases} 
		2^s & \mbox{if } n_0\leq 1\mbox{ and }\,\forall_{i} p_i\equiv 1\, (\!\!\!\!\!\!\mod{6})\\
		0 & \mbox{otherwise.}\end{cases}$$
	\end{enumerate}
\end{prop}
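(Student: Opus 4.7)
The plan is to factor the counting through the natural surjection $H(dM^2,L;\G^0(N))\twoheadrightarrow H(dM^2,L;\G^0(1))$ which sends a $\G^0(N)$-class to the containing $\SL_2(\Z)$-class. For each representative $S_j$ the fibre is in bijection with the double coset space $E(S_j)\backslash\SL_2(\Z)/\G^0(N)$, with $E(S_j)$ as in Lemma \ref{Lemma_on_E(S')}. Identifying $\SL_2(\Z)/\G^0(N)$ with $\mathbb{P}^1(\Z/N\Z)$ (of cardinality $r=N\prod_{p|N}(1+p^{-1})$) via the first-column map, I reduce everything to an orbit count for the action of $E(S_j)/\{\pm 1_2\}$ on $\mathbb{P}^1(\Z/N\Z)$.

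In case (1), $E(S_j)=\{\pm 1_2\}\subseteq\G^0(N)$, so the action on $\mathbb{P}^1(\Z/N\Z)$ is trivial and every fibre has size $r$. I would then verify that $A_i^tS_jA_i$ is a complete set of distinct representatives: surjectivity comes from decomposing any $g\in\SL_2(\Z)$ realising an $\SL_2(\Z)$-equivalence as $g=A_iB$ with $B\in\G^0(N)$; distinctness follows by chasing a hypothetical equality $B^tA_{i_1}^tS_{j_1}A_{i_1}B=A_{i_2}^tS_{j_2}A_{i_2}$ down to $A_{i_1}BA_{i_2}^{-1}\in E(S_{j_1})\subseteq\pm\G^0(N)=\G^0(N)$, forcing $i_1=i_2$. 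The count $t$ of $\SL_2(\Z)$-classes is the classical class number of the order of conductor $M$ inside $\Q(\sqrt d)$, which gives the claimed expression via the standard formula $|\mathrm{Cl}(\mathcal{O}_{dM^2})|=\frac{h(d)M}{[\mathcal{O}^\times:\mathcal{O}_{dM^2}^\times]}\prod_{p|M}(1-\chi_d(p)/p)$, with $u(d)$ recording the unit-group index.

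In cases (2) and (3) there is a unique $\SL_2(\Z)$-class and $E(S)/\{\pm 1_2\}$ is cyclic of order $u(d)\in\{2,3\}$. By Burnside's lemma,
$$|H(d,L;\G^0(N))|=\frac{1}{u(d)}\bigl(r+(u(d)-1)\,\mathcal{L}_d\bigr),$$
where $\mathcal{L}_d$ is the number of fixed points of a generator of $E(S)/\{\pm 1_2\}$ on $\mathbb{P}^1(\Z/N\Z)$. By CRT this decomposes as a product of local fixed-point counts over $p|N$. Spelling out the fixed-point condition at each prime: for $d=-4$ the generator is $\xi_S=\mat{0}{1}{-1}{0}$ and $(x:y)$ is fixed iff there is a unit $\lambda\in(\Z/p^{n_p}\Z)^\times$ with $\lambda^2\equiv -1$; for $d=-3$ the generator is $g_0=\tfrac12 1_2+\xi_S=\mat{1}{1}{-1}{0}$, and the corresponding condition is $\lambda^2-\lambda+1\equiv 0\pmod{p^{n_p}}$.

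The main obstacle will be the careful local analysis at the ramified primes, namely $p=2$ when $d=-4$ and $p=3$ when $d=-3$. One needs exactly $1$ fixed point at $(p,n_p)=(2,1)$ (respectively $(3,1)$) but $0$ as soon as $4\mid N$ (respectively $9\mid N$); together with the standard counts ($2$ fixed points at each prime with $p\equiv 1\pmod 4$ resp.\ $p\equiv 1\pmod 6$, and $0$ otherwise) this multiplies out to the formulas for $\mathcal{L}_{-4}$ and $\mathcal{L}_{-3}$ stated, and substituting into Burnside completes the proof.
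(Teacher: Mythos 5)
Your proposal is correct and reaches the same formulas, but it organizes the key counting step differently from the paper. Both arguments share the same skeleton: fibre the set $H(dM^2,L;\G^0(N))$ over $H(dM^2,L;\G^0(1))$, observe that the fibre over $[S_j]$ is the double coset space $E(S_j)\back\SL_2(\Z)/\G^0(N)$ with $E(S_j)$ as in Lemma \ref{Lemma_on_E(S')}, and reduce cases (2) and (3) to counting solutions of a quadratic congruence modulo $N$ via CRT and the structure of $(\Z/p^n\Z)\ti$. Where you diverge is in how the double cosets are counted: the paper writes down the explicit representatives $\(\begin{smallmatrix} * & u\\ * & v\end{smallmatrix}\)$ with $v|N$, $u \bmod N/v$, and by hand works out when two of them are identified (the conditions $N|uu'+vv'$, etc.), showing each identification pairs up exactly two representatives unless $u^2\equiv -1$ (resp.\ $u^2+u+1\equiv 0$) mod $N$; you instead invoke Burnside's lemma for the action of $E(S_j)/\{\pm 1_2\}$ on $\mathbb{P}^1(\Z/N\Z)$ and compute local fixed-point counts. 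Your route is cleaner and makes the appearance of the terms $\frac{1}{u(d)}(r+(u(d)-1)\mathcal{L}_d)$ structurally transparent (in particular why the factor $2$ multiplies $\mathcal{L}_{-3}$: the two nontrivial elements of the order-$3$ stabilizer have the same fixed points), at the cost of having to set up the projective-line model carefully; the paper's route is more pedestrian but entirely self-contained. Your local analysis at the ramified primes ($p=2$ for $d=-4$, $p=3$ for $d=-3$) and at split/inert primes matches the paper's auxiliary lemma, and your congruence $\lambda^2-\lambda+1\equiv 0$ is equivalent to the paper's $u^2+u+1\equiv 0$ under $\lambda\mapsto -\lambda$. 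One small correction: for right cosets $A\G^0(N)$ the invariant is the \emph{second} column of $A$ modulo $N$ up to unit scalars (this is exactly the parametrization $\(\begin{smallmatrix} * & u\\ * & v\end{smallmatrix}\)$ the paper uses), not the first column, since right multiplication by $\(\begin{smallmatrix} a & Nb\\ c & d\end{smallmatrix}\)$ mixes the second column into the first; this does not affect any of your counts, because the induced action of $E(S_j)$ on $\mathbb{P}^1(\Z/N\Z)$ is still left multiplication by the matrix, but the identification should be stated correctly.
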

\begin{proof}
	Recall (e.g. \cite[Proposition 2.5]{iwantopics} and \cite[Theorem 8.2]{Cohn}) that 
	$$r=N\prod_{p|N} (1+1/p)$$
	and
	$$t=|\Cl_d(M)|=\begin{cases} {|\Cl_d(1)|\over u(d)} M \prod_{p|M} (1-p^{-1} \left(\frac{d}{p} \right)) & \mbox{if } M>1\\ |\Cl_d(1)| & \mbox{if } M=1 \end{cases} .$$
	Each equivalence class in $H(dM^2,L;\G^0(1))$ (i.e. $j\in\{ 1,\ldots ,t\}$ is fixed) can be written as a union of sets 
	$\{\T{g}\T{A_i} S_jA_ig: g\in\G^0(N)\}$ 
	with $i\in\{ 1,\ldots ,r\}$. The question is whether they are all disjoint. Assume this is not the case for the sets corresponding to $i_1$ and $i_2$, i.e. assume there exists $g\in\G^0(N)$ such that $\T{A_{i_1}} S_jA_{i_1}=\T{g}\T{A_{i_2}} S_jA_{i_2}g$. Then $S_j=\T{(A_{i_2}gA_{i_1}^{-1})} S_jA_{i_2}gA_{i_1}^{-1}$, where $A_{i_2}gA_{i_1}^{-1}\in\SL_2(\Z)$. Hence, $A_{i_2}gA_{i_1}^{-1}\in E(S_j)$ and Lemma \ref{Lemma_on_E(S')} tells us precisely what these elements may be. The question is whether it does not imply $i_1=i_2$ and how often this is the case. 
	\begin{enumerate}
		\item If $d \neq -4, -3$, or if $M>1$, then $A_{i_2}g=\pm A_{i_1}$, and so $i_1=i_2$.
		\item If $(d,M)=(-4,1)$, then either $i_1=i_2$ as above, or $A_{i_2}g=\pm\xi_{S_j} A_{i_1}$.
		\item If $(d,M)=(-3,1)$, then we get two additional possibilities: either $A_{i_2}g=\pm ({1\over 2}1_2+\xi_{S_j})A_{i_1}$ or $A_{i_2}g=\pm (-{1\over 2}1_2+\xi_{S_j})A_{i_1}$.
	\end{enumerate}
	Let us check whether the remaining cases may happen when $i_1\neq i_2$. Without loss of generality we may assume that $L=1$. Observe that both $H(-4,1;\G^0(1))$ and $H(-3,1;\G^0(1))$ contain only one class, namely the one determined by $1_2$ and $\(\begin{smallmatrix} 1 & 1/2\\ 1/2& 1\end{smallmatrix}\)$ respectively. Indeed, each of the elements $\(\begin{smallmatrix} a' & b'/2\\ b'/2& c'\end{smallmatrix}\)$ in $H(dM^2,1;\G^0(1))$ can be written uniquely in a {\it reduced form}, that is with $|b'|\leq a'\leq c'$ (e.g. \cite{Cox}, Theorem 2.8). It is easy to see that if $M=1$ and $d=-4,-3$, the only matrix satisfying these conditions is $1_2$ and $\(\begin{smallmatrix} 1 & 1/2\\ 1/2& 1\end{smallmatrix}\)$ respectively. From this observation it also follows that $|\Cl_{-4}(1)|=|\Cl_{-3}(1)|=1.$
	
	Choose a set of representatives for $\SL_2(\Z)/\G^0(N)$ to be\footnote{For the proof that these indeed constitute representatives for $\SL_2(\Z)/\G^0(N)$, consult \cite{iwantopics}, Proposition 2.5.}
	$$\{ A_1,\ldots ,A_r\} =\{\(\begin{smallmatrix} * & u\\ * & v\end{smallmatrix}\)\in\SL_2(\Z):v|N, u\hspace{-0.3cm} \pmod{N/v} \}\, .$$ 
	Let $A_{i_1}=\(\begin{smallmatrix} * & u\\ * & v\end{smallmatrix}\)$ and $A_{i_2}=\(\begin{smallmatrix} * & u'\\ * & v'\end{smallmatrix}\)$. 
	
	Assume first that $(d,M)=(-4,1)$. Then $g=\pm A_{i_2}^{-1}\(\begin{smallmatrix} & 1\\ -1& \end{smallmatrix}\) A_{i_1}$ is in $\G^0(N)$ if and only if $N|uu'+vv'$. Since $\gcd(u,v)=1=\gcd(u',v')$ and $v,v'|N$, we must have $v=\gcd(u',N)$, $v'=\gcd(u,N)$ and $\gcd(v,v')=1$. Put $u=v'\underline{u}$ and $u'=v\underline{u'}$, so that ${N\over vv'}|\underline{u}\underline{u'}+1$. Note that $\underline{u'}$ is determined by $v,v',\underline{u}$. Moreover, under the assumption $\gcd(v,v')=1$, $i_1=i_2$ if and only if $v=v'=1$ and $u=u'$ satisfies $u^2 =-1\bmod{N}$.
	
	Hence, if we fix $u,v$, then $u',v'$ are uniquely determined and thus there are 
	$$\frac{N}{2}\prod_{p|N} (1+1/p) +\frac{\mathcal{L}_{-4}}{2}$$
	$\G^0(N)$-non-equivalent classes within each class in $H(-4,L;\G^0(1))$, where
	$$\mathcal{L}_{-4} :=\#\{ u\in\{ 1,\ldots ,N\} :u^2 =-1\bmod{N}\}\, .$$
	
	If $(d,M)=(-3,1)$, then $g=\pm A_{i_2}^{-1}(\pm {1\over 2}I+\(\begin{smallmatrix} 1/2 & 1\\ -1& -1/2\end{smallmatrix}\))A_{i_1}$ is in $\G^0(N)$ if and only if 
	\begin{equation}\label{conditions}
	N|uu'+vv'+ u'v\qquad\mbox{or}\qquad N|uu'+vv'+ uv'\, .\tag{$\star\star$}
	\end{equation}
	We will look for the solutions to the first condition, the latter one being symmetric.
	
	From similar reasons as in the previous situation, $u'=v\underline{u'}$ and\linebreak $\gcd (\underline{u'}, N)=1$. Hence, $\gcd (v,v')|uu'$, and thus $v$ and $v'$ are coprime. Our condition becomes ${N\over v} |v'+\underline{u'} (u+v)$ and implies $v'=\gcd (N,u+v)$. Let $t=u+v$ and write $t=v'\underline{t}$. It is easy to see that $t$ runs through the rests modulo $N/v$ and $\gcd(t,v)=1$. Moreover, ${N\over vv'} |1+\underline{u'}\underline{t}$.
	
	Hence, if we fix $v$ and $u$, $v'$ and $u'$ are uniquely determined. Similarly, if $N|u\tilde{u}+v\tilde{v}+ u\tilde{v}$, then $\tilde{u}$ and $\tilde{v}$ are uniquely determined by $u,v$. Moreover, it is easy to check that the conditions \eqref{conditions} hold at the same time only if $v=v'=1$ and $u=u'$ satisfies $u^2+u+1\equiv 0\pmod{N}$. Hence the conditions \eqref{conditions} and uniqueness of the solution for each of them imply that unless $v=1$ and $u^2+u+1\equiv 0\pmod{N}$, the matrix $\(\begin{smallmatrix} * & u\\ * & v\end{smallmatrix}\)$ is $\G^0(N)$-equivalent to exactly two matrices. Therefore, there are 
	$$\frac{N}{3}\prod_{p|N} (1+1/p) +2\frac{\mathcal{L}_{-3}}{3}$$
	$\G^0(N)$-non-equivalent classes within each class in $H(-3,L;\G^0(1))$, where
	$$\mathcal{L}_{-3} :=\#\{ u\in\{ 1,\ldots ,N\} :u^2+u+1=0\bmod{N}\}\, .$$
	
	In the following lemma we compute the quantities $\mathcal{L}_{-4}$ and $\mathcal{L}_{-3}$, and that finishes the proof.
\end{proof}
\begin{lemma} \mbox{}
	\begin{enumerate}
		\item Let $N=2^{n_0}p_1^{n_1}\ldots p_s^{n_s}$ be the prime decomposition of $N$. Then 
		$$\#\{ u\in\(\Z /N\Z\)\ti :u^2 =-1\bmod{N}\}
		=\begin{cases} 
		2^s & \mbox{if } n_0\leq1\mbox{ and }\,\forall_{i} p_i\equiv 1\, (\!\!\!\!\!\!\mod{4})\\
		0 & \mbox{otherwise.}\end{cases}\vspace{-0.2cm}	$$	
		\item Let $N=3^{n_0}p_1^{n_1}\ldots p_s^{n_s}$ be the prime decomposition of $N$. Then
		$$\#\{ u\in\(\Z /N\Z\)\ti :u^2+u+1=0\bmod{N}\}
		=\begin{cases} 
		2^s & \mbox{if } n_0\leq 1\mbox{ and }\,\forall_{i} p_i\equiv 1\, (\!\!\!\!\!\!\mod{6})\\
		0 & \mbox{otherwise.}\end{cases}$$
	\end{enumerate}
\end{lemma}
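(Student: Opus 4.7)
The plan is to reduce both counts to the prime power case via the Chinese Remainder Theorem, and then analyse each local factor by combining elementary congruence arithmetic with Hensel's lemma. Writing $N = \prod_j q_j^{m_j}$, the isomorphism $\Z/N\Z \cong \prod_j \Z/q_j^{m_j}\Z$ transports the equation $f(u) \equiv 0 \pmod{N}$ into an independent system of equations $f(u_j) \equiv 0 \pmod{q_j^{m_j}}$, so both cardinalities in the lemma factor as $\prod_j \#\{u_j \in (\Z/q_j^{m_j}\Z)^\times : f(u_j) \equiv 0\}$. It therefore suffices to count solutions modulo each prime power, and multiply.

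For part (1), with $f(u) = u^2 + 1$, the local analysis runs as follows. At an odd prime $p$, quadratic reciprocity (or the explicit description of $(\Z/p\Z)^\times$ as cyclic of order $p-1$) shows that $-1$ is a square modulo $p$ iff $p \equiv 1 \pmod{4}$, in which case there are exactly two roots modulo $p$. Since $f'(u) = 2u$ is a unit at any root when $p$ is odd, Hensel's lemma lifts each root uniquely to $\Z/p^{m}\Z$, giving exactly $2$ solutions modulo $p^{m}$ for every $m \geq 1$ (and $1$ solution trivially when $p \nmid N$, i.e.\ when $m = 0$). At the prime $2$: modulo $2$ there is the single solution $u = 1$; modulo $4$ the squares are $\{0,1\}$, so $u^2 \equiv -1$ has no solution, and hence none modulo $2^{n_0}$ for $n_0 \geq 2$. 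Multiplying these local counts yields $2^s$ precisely when $n_0 \leq 1$ and every odd $p_i$ satisfies $p_i \equiv 1 \pmod 4$, and $0$ otherwise.

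For part (2), with $g(u) = u^2 + u + 1$, I complete the square in the form $4g(u) = (2u+1)^2 + 3$, so at any odd prime $p$ the equation $g(u) \equiv 0 \pmod{p^m}$ is equivalent to $(2u+1)^2 \equiv -3 \pmod{p^m}$. By quadratic reciprocity $-3$ is a square modulo an odd prime $p \neq 3$ iff $p \equiv 1 \pmod 3$, i.e.\ iff $p \equiv 1 \pmod 6$. As in part (1), $g'(u) = 2u+1$ is a unit modulo any odd prime at a root (else $2u+1 \equiv 0$ and $-3 \equiv 0$, forcing $p=3$), so Hensel lifts each of the two roots uniquely to every $p^m$. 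The prime $p = 2$ is immediate: $g(u) = u(u+1)+1$ is odd for every $u$, so no solution modulo $2$ whenever $2 \mid N$. Finally at $p = 3$: modulo $3$ one checks $g(1) = 3 \equiv 0$, giving the unique root $u = 1$; but $g'(1) = 3$, so Hensel does not apply directly. A direct check modulo $9$ of the three lifts $u \in \{1,4,7\}$ gives $g(u) \in \{3, 21, 57\}$, all $\equiv 3 \pmod 9$, so there is no solution modulo $9$, and a fortiori none modulo $3^{n_0}$ for $n_0 \geq 2$. Combining these contributions produces the stated count $2^s$ when $n_0 \leq 1$ and every $p_i \equiv 1 \pmod 6$, and $0$ otherwise.

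The only non-routine step is the mod $9$ obstruction in case (2) (and its analogue, the mod $4$ obstruction, in case (1)), where $g'$ (resp.\ $f'$) vanishes at the mod-$p$ root and so Hensel's lemma fails; both are settled by a one-line direct computation. Everything else is a bookkeeping exercise with CRT and Hensel lifting, and the multiplicativity of the count over primes matches the multiplicative form of the answer.
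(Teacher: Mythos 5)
Your proof is correct. It follows the same overall strategy as the paper --- reduce via the Chinese Remainder Theorem to counting solutions modulo each prime power, treat the exceptional prime ($2$ resp.\ $3$) by hand, and show each remaining local factor contributes $2$ or $0$ according to a congruence condition on $p_i$ --- but the local arguments differ. The paper works with the group structure of $(\Z/p^n\Z)^\times$: for part (1) it counts square roots of $-1$ using the fact that the cyclic group of order $p^{n-1}(p-1)$ has a unique element of order $2$, and for part (2) it factors $u^3-1=(u-1)(u^2+u+1)$ and counts elements of order $3$. You instead count roots modulo $p$ (via quadratic reciprocity, after completing the square in part (2)) and lift them uniquely to $p^m$ by Hensel's lemma, checking that the derivative is a unit at every root when $p$ is odd and $p\neq 3$. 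The two routes are equally elementary; yours makes the obstruction at the ramified prime more transparent (Hensel visibly fails at $p=3$ since $g'(1)\equiv 0$, and the explicit check mod $9$ settles it), while the paper's order-counting argument dispatches the prime-power case without any lifting and explains structurally why the answer is always $2$ or $0$. Both correctly handle the implicit point that any root of $u^2+1$ or $u^2+u+1$ is automatically a unit, so counting over all residues and over $(\Z/N\Z)^\times$ agree.
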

\begin{proof}
	This follows from Chinese Remainder Theorem and two basic facts: \begin{itemize}
		\item For an odd prime $p$, $\(\Z /p^{n}\Z\)\ti$ is a cyclic group of order $p^{n-1}(p-1)$.
		\item $\(\Z /2^n\Z\)\ti$ is cyclic of order $1$ and $2$ for $n=1$ and $2$, respectively. If $n\geq 3$, then it is the product of two cyclic groups, one of order $2$, the other of order $2^{n-2}$. 
	\end{itemize}
	\begin{enumerate}
		\item Depending whether $p\equiv 1\, (\!\!\!\!\mod{4})$ or $p\equiv -1\, (\!\!\!\!\mod{4})$, $-1$ is a square in $(\Z/p^n\Z)\ti$ or not, respectively. Moreover, because there is only one element of order $2$, there are either $2$ or $0$ solutions to $u^2 =-1\bmod{p^n}$.
		If $p=2$, $-1$ is a square in $(\Z/2^n\Z)\ti$ only if $n=1$, in which case $-1=1^2$.
		\item First note that the equation $u^2+u+1=0$ has no solution modulo $2$. Furthermore, because the solutions are of the form $(-1+\sqrt{-3})2^{-1}$, the equation has one solution modulo $3$ and no solutions modulo $3^n$ for $n>1$. Now, since $u^3-1=(u-1)(u^2+u+1=0)$, we will look for the elements of order $3$ in $(\Z/p^n\Z)\ti$, where $p\equiv\pm 1\, (\!\!\!\!\mod{6})$.
		
		If $p\equiv -1\, (\!\!\!\!\mod{6})$, then the order of $(\Z/p^n\Z)\ti$ is not divisible by $3$. In the other case, there are two elements of order $3$, $u_0$ and $u_0^2$, say. Both of them are zeros of the polynomial $u^2+u+1=0\bmod{p^n}$.
	\end{enumerate}
\end{proof}

In Proposition \ref{prop_G^0(N)_repr} we computed the size of $H(dM^2,L;\G^0(N))$. On the other hand, we know that (\cite{PSSc1}, proof of Proposition 5.3) 
$$|H_1(dM^2,L;\G^0(N))| = |\Cl_d(MN)| = {|\Cl_d(1)|\over u(d)} M N \prod_{p|MN} (1-p^{-1} \left(\frac{d}{p} \right))$$
if $MN>1$, where $u(d)$ is as in Proposition \ref{prop_G^0(N)_repr}. 
Combining these we get the following result.

\begin{prop}\label{phi_surjective} 
	The map $\tilde{\phi}_{L,M}\colon\Cl_d(MN)\to H(dM^2,L;\G^0(N))$
	is surjective if and only if $\left(\frac{dM^2}{p} \right)=-1$ for all primes $p|N$.
\end{prop}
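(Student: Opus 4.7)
The plan is to leverage Proposition \ref{phi_injective}: since $\tilde{\phi}_{L,M}$ is already known to be injective, surjectivity is equivalent to the equality of cardinalities $|\Cl_d(MN)| = |H(dM^2, L; \G^0(N))|$. All the counting has been done---the size of the target in Proposition \ref{prop_G^0(N)_repr}, and the size of the source in the formula for $|\Cl_d(MN)|$ cited just before the statement---so only the size comparison remains.

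In the generic case $d \neq -3, -4$ or $M > 1$, dividing the formula of Proposition \ref{prop_G^0(N)_repr}(1) by that for $|\Cl_d(MN)|$ and cancelling the common factor $\frac{|\Cl_d(1)|}{u(d)}\, MN \prod_{p \mid M}\left(1 - p^{-1}\left(\frac{d}{p}\right)\right)$ leaves
\begin{equation*}
\frac{|H(dM^2,L;\G^0(N))|}{|\Cl_d(MN)|} \;=\; \prod_{p \mid N}\frac{1+1/p}{\,1 - p^{-1}\!\left(\frac{dM^2}{p}\right)\,},
\end{equation*}
where I have used that $\left(\frac{dM^2}{p}\right)$ equals $\left(\frac{d}{p}\right)$ when $p \nmid M$ and equals $0$ when $p \mid M$, so the ``missing'' denominator factor for a prime $p \mid \gcd(M,N)$ is simply $1$. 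Each factor on the right is $\geq 1$, with equality exactly when $\left(\frac{dM^2}{p}\right) = -1$. Since the injection forces this ratio to be $\geq 1$ anyway, equality of cardinalities---i.e., surjectivity---holds iff every factor equals $1$, which is the stated condition.

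The exceptional cases $(d,M) = (-4,1)$ and $(-3,1)$ I would dispose of by direct comparison using Proposition \ref{prop_G^0(N)_repr}(2)--(3). For $d = -4$: the condition $\left(\frac{-4}{p}\right) = -1$ for every $p \mid N$ forces $N$ odd with every prime factor $\equiv 3 \pmod 4$; in this regime $\mathcal{L}_{-4} = 0$ by its characterization in Proposition \ref{prop_G^0(N)_repr}(2), and the main term $\frac{N}{2}\prod_{p \mid N}(1+1/p)$ matches $|\Cl_{-4}(N)| = \frac{N}{2}\prod_{p \mid N}\left(1 - p^{-1}\left(\frac{-4}{p}\right)\right)$ factor-by-factor. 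Conversely, if some $p \mid N$ has $\left(\frac{-4}{p}\right) \in \{0,1\}$, then already $\frac{N}{2}\prod_{p \mid N}(1+1/p) > |\Cl_{-4}(N)|$, and the non-negative term $\mathcal{L}_{-4}/2$ can only widen the gap, so the injection cannot be onto. The case $(d,M) = (-3,1)$ is entirely analogous, with $\pmod 4$ replaced by $\pmod 6$ and $u(-4)=2$ by $u(-3)=3$; the vanishing of $\mathcal{L}_{-3}$ under the hypothesis is equally immediate from its characterization. The whole argument is thus essentially bookkeeping, with no real obstacle beyond matching the exotic ``correction terms'' $\mathcal{L}_d$ to the number-theoretic condition.
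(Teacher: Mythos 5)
Your proposal is correct and follows the same route as the paper: injectivity from Proposition \ref{phi_injective} reduces surjectivity to the cardinality comparison between $|\Cl_d(MN)|$ and the counts of Proposition \ref{prop_G^0(N)_repr}, which the paper performs implicitly ("Combining these...") and you carry out explicitly, including the correct treatment of primes dividing $\gcd(M,N)$ and the exceptional discriminants $-3,-4$.
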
 

\begin{cor}\label{cor:H_1=H}
	The following conditions are equivalent:
	\begin{enumerate}
		\item $H_1(dM^2,L;\G^0(N))=H(dM^2,L;\G^0(N))$,
		\item $\left(\frac{dM^2}{p} \right)=-1$ for all $p|N$,
		\item $|H(dM^2N^2,L;\G^0(1))|=|H(dM^2,L;\G^0(N))|$.
	\end{enumerate}
\end{cor}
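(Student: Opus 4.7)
My plan is to read all three equivalences off Propositions \ref{phi_injective} and \ref{phi_surjective} just established, using that by definition $H_1(dM^2,L;\G^0(N))$ is the image of the injection $\tilde\phi_{L,M}\colon \Cl_d(MN)\to H(dM^2,L;\G^0(N))$. No new casework should be required, since all the delicate analysis (including the $d=-3,-4$ cases) has already been absorbed into Proposition \ref{prop_G^0(N)_repr} and hence into Proposition \ref{phi_surjective}.

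For the equivalence (1) $\Leftrightarrow$ (2) I would simply observe that, $\tilde\phi_{L,M}$ being injective by Proposition \ref{phi_injective}, the equality $H_1=H$ is precisely the assertion that $\tilde\phi_{L,M}$ is surjective. Proposition \ref{phi_surjective} then characterises surjectivity by the Legendre-symbol condition $\left(\tfrac{dM^2}{p}\right)=-1$ for all primes $p\mid N$, which is (2).

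For (1) $\Leftrightarrow$ (3) the key trick is to apply the same two propositions a second time, with $M$ replaced by $MN$ and $N$ replaced by $1$. On the one hand, injectivity of $\tilde\phi_{L,M}$ already gives $|H_1(dM^2,L;\G^0(N))|=|\Cl_d(MN)|$. On the other hand, the map $\tilde\phi_{L,MN}\colon \Cl_d(MN)\to H(dM^2N^2,L;\G^0(1))$ is injective by Proposition \ref{phi_injective} and automatically surjective by Proposition \ref{phi_surjective} (the Legendre-symbol condition on primes dividing $1$ is vacuous), so it is a bijection. Combining these two identities yields
\[
|H(dM^2N^2,L;\G^0(1))|=|\Cl_d(MN)|=|H_1(dM^2,L;\G^0(N))|.
\]
Since $H_1\subseteq H$, condition (3) thus reads $|H_1|=|H|$, which is in turn equivalent to $H_1=H$, i.e.\ to (1).

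I do not anticipate any real obstacle: once (1) $\Leftrightarrow$ (2) is extracted from Proposition \ref{phi_surjective}, the remainder is pure cardinality bookkeeping, and the key reinterpretation of $|H(dM^2N^2,L;\G^0(1))|$ as $|\Cl_d(MN)|$ is nothing more than the observation that the surjectivity hypothesis of Proposition \ref{phi_surjective} is trivially satisfied at level $\G^0(1)$.
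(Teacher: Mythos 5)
Your argument is correct and follows essentially the same route as the paper: both reduce the corollary to Propositions \ref{phi_injective} and \ref{phi_surjective} together with the identification $\Cl_d(MN)\cong H(dM^2N^2,L;\G^0(1))$. The only cosmetic difference is that you re-derive that identification by specializing the two propositions to the pair $(MN,1)$, whereas the paper cites it directly from Proposition 5.3 of \cite{PSSc1}.
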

\begin{proof}
	This follows from the fact that $\Cl_d(MN)\cong H(dM^2N^2,L;\G^0(1))$ (Proposition 5.3, \cite{PSSc1}) and Proposition \ref{phi_injective}, \ref{phi_surjective}.
\end{proof}
\section{The relation}
Fix $\Phi\in V_{\pi}$, $N\in\N$ and assume that $\Phi$ is right invariant by the subgroup $I_{N_1,N_2}$ of $G(\A_f)$ for some $N_1|N_2$. 

For any two integers $L,M$, we define the element $H(L,M)\in G(\A)$ by
$$H(L,M)_p:=\begin{cases} \(\begin{smallmatrix} LM^2 & & & \\ & LM & & \\ & & 1 & \\ & & & M \end{smallmatrix}\) ,& p\mid LM\\ \(\begin{smallmatrix} 1 & & & \\ & 1 & & \\ & & 1 & \\ & & & 1 \end{smallmatrix}\) ,& p\nmid LM\mbox{ or } p=\infty\end{cases}\, .$$
Note that $H(1,1) = 1$ and $\Phi(gH(L,M)_p) = \Phi(gh_p(l_p, m_p))$ for all $p<\infty$, $g\in G(\A)$, where $l_p=\ord_p L$ and $m_p=\ord_p M$. For the rest of this section we relax the notation and abbreviate $h_p(l, m):=h_p(l_p, m_p)$.

Fix a fundamental discriminant $d<0$ and put $S=S(d)$, $T=T_S$. Let $\psi = \prod_v\psi_v$ be the character of $\A$ such that
\begin{itemize}
	\item the conductor of $\psi_p$ is $\Z_p$ for all (finite) primes $p$,
	\item $\psi_\infty(x) = e(x),$ for $x \in \R$,
	\item $\psi|_\Q =1;$
\end{itemize}
so that $\theta(u(X)):=\psi(\tr(SX))$ is a character of $U(\A)$. Let $\Lambda= \prod_{p\leq \infty}\Lambda_p$ be a character of $T(\A )/T(\Q )T(\R )$ such that $\Lambda|_{\A^\times} = 1$, and let $C(\Lambda)=\prod_p p^{c(\Lambda_p)}$ be the smallest integer such that $\Lambda|_{T_{C(\Lambda)}}=1$ (cf. Def. \ref{def:c(Lambda)}, \ref{def:T_N}).
Suppose that $\Phi = \tilde{\Phi}\otimes_{p\nmid N_2} \phi_p$ is a pure tensor in the space of $\pi$ away from the level. If $\pi$ has a global Bessel model of type $(\Lambda, \theta)$, then for each place $p$ of $\Q$,  $\pi_p$ has a local Bessel model of type $(\Lambda_p, \theta_p)$ and each $\phi_p$ corresponds to a (unique up to multiples) vector $B_{\phi_p}$ in the local Bessel model of $\pi_p$.
\begin{dfn}
	For each $\Phi\in V_{\pi}$ and a character $\Lambda$ such that a global $(\Lambda,\theta)$-Bessel model exists, define
	$$M_{\Phi, \Lambda}:=\prod_{p\nmid N_2} p^{c(\Lambda_p)}.$$
\end{dfn}

\begin{rem}
	If $\pi_p$ is a spherical representation with trivial central character and it is not of type I (that is, it is of type IIb, IIIb, IVd, Vd or VId), then $\pi_p$ admits a local Bessel model if and only if $\Lambda=\triv$, in which case $c(\Lambda_p)=0$. The only representations that will occur in the proof of main theorem regarding Maass relations are of type IIb, and thus in what follows we could take simply $M_{\Phi, \Lambda}=1$.
\end{rem}

The following lemma is the base for our main results.

\begin{lemma}\label{lem:Besselrelation} 
Let $L,M, L', M'$ be positive integers such that $L'|L$, $M_{\Phi, \Lambda} |M'|M$, $(L,N_2^{\infty})=(L',N_2^{\infty})$ and $(M,N_2^{\infty})=(M',N_2^{\infty})$. If a local $(\Lambda_p, \theta_p)$-Bessel model exists at all primes $p\nmid N_2$, then the following relation holds:
\begin{equation}\label{Besselrelation} 
B_\Phi (H(L,M))\prod_{\substack{p|L'M'\\ p\nmid N_2}} B_{\phi_p}(h_p(l',m')) = B_\Phi (H(L',M'))\prod_{\substack{p|LM\\ p\nmid N_2}} B_{\phi_p}(h_p(l,m))\, .
\end{equation}
Otherwise, $B_\Phi (H(\underline{L},\underline{M}))=0$ for all integers $\underline{L},\underline{M}$.
\end{lemma}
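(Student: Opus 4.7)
The plan is to exploit the uniqueness of local Bessel functionals (Novodvorsky; Prasad--Bighash) to factor $B_\Phi$ into purely local contributions at primes $p\nmid N_2$, and then to observe that the hypotheses are exactly what is needed to make the remaining ``ramified'' factor independent of whether we use $(L,M)$ or $(L',M')$. First I would dispose of the degenerate case: if for some $p_0\nmid N_2$ the representation $\pi_{p_0}$ admits no $(\Lambda_{p_0},\theta_{p_0})$-Bessel functional, then any nonzero global Bessel functional would, by uniqueness, restrict to a nonzero local one at $p_0$; since none exists, the integral defining $B_\Phi$ must vanish identically on $V_\pi$, which settles the ``otherwise'' clause.

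Assuming local Bessel functionals exist at every $p\nmid N_2$, the key step is to use the tensor product structure of $\pi$ together with local uniqueness to obtain, for the pure tensor $\Phi=\tilde\Phi\otimes\bigotimes_{p\nmid N_2}\phi_p$, a factorization
\begin{equation*}
B_\Phi(g) = B^{N_2}_{\tilde\Phi}(g_\infty,g_{N_2})\cdot\prod_{p\nmid N_2} B_{\phi_p}(g_p),
\end{equation*}
where $B^{N_2}_{\tilde\Phi}$ depends only on the components of $g$ at $\infty$ and at primes dividing $N_2$, and the unramified factors are normalized as in Theorem~\ref{thm:Sugano}. Specializing at $g=H(L,M)$: the archimedean component is trivial, and for $p\nmid N_2$ we have $H(L,M)_p=h_p(l,m)$ if $p\mid LM$ and $H(L,M)_p=1$ otherwise. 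The divisibility $M_{\Phi,\Lambda}\mid M$, combined with $M_{\Phi,\Lambda}=\prod_{p\nmid N_2}p^{c(\Lambda_p)}$, forces $c(\Lambda_p)=0$ at every $p\nmid N_2M$; at such primes Sugano's normalization gives $B_{\phi_p}(1)=B^{(0)}_{\pi_p}(h_p(0,0))=1$, so only the primes $p\mid LM$ with $p\nmid N_2$ contribute nontrivially. This yields
\begin{equation*}
B_\Phi(H(L,M)) = B^{N_2}_{\tilde\Phi}(1,H(L,M)_{N_2})\prod_{\substack{p\mid LM\\ p\nmid N_2}} B_{\phi_p}(h_p(l,m)),
\end{equation*}
and the analogous identity with $L',M'$ in place of $L,M$ (using $M_{\Phi,\Lambda}\mid M'$).

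To conclude, the hypotheses $(L,N_2^\infty)=(L',N_2^\infty)$ and $(M,N_2^\infty)=(M',N_2^\infty)$ are precisely what is needed to ensure that $H(L,M)_p=H(L',M')_p$ at every $p\mid N_2$, so the ramified factors $B^{N_2}_{\tilde\Phi}$ on the two sides agree; cross-multiplying the two identities then gives \eqref{Besselrelation}. The main delicate point is the factorization step: although in principle it follows by uniqueness of local Bessel models applied prime by prime, one has to track normalizations carefully to verify that $B^{N_2}_{\tilde\Phi}$ really depends only on $g_\infty$ and $g_{N_2}$, which is exactly what allows perturbing $L,M$ at primes away from $N_2$ to decouple from it.
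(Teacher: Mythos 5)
Your proposal is correct and follows essentially the same route as the paper: both arguments use uniqueness of local Bessel functionals to factor $B_\Phi(H(L,M))$ into the unramified local values $B_{\phi_p}(h_p(l,m))$ at $p\nmid N_2$ times a factor depending only on the data at $\infty$ and at $p\mid N_2$ (your $B^{N_2}_{\tilde\Phi}$ is the paper's constant $C$ times the product over $p\mid N_2$, which the paper pins down by specializing to $L_{\S}=(L,N_2^\infty)$ and $M_{\Phi,\Lambda}(M,N_2^\infty)$), and then the hypotheses on $(L,N_2^\infty)$, $(M,N_2^\infty)$ and $M_{\Phi,\Lambda}\mid M'$ make that factor common to both sides so that cross-multiplication yields \eqref{Besselrelation}. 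Your treatment of the degenerate case via vanishing of the global functional when some local model fails to exist is likewise the paper's argument.
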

\begin{proof}
Observe first that if $\pi$ does not have a global Bessel model of type $(\Lambda, \theta)$, then both sides of \eqref{Besselrelation} are zero. We may assume then that the global Bessel model exists.

By uniqueness of local Bessel functionals, 
$$B_{\Phi}(H(L,M))=C\prod_{p|N_2} B_{\phi_p}(h_p(l,m))\prod_{p\nmid N_2} B_{\phi_p}(h_p(l,m))$$
for any positive integers $L,M$. The constant $C$ can be found if we specialise to $L_{\S}:=(L,N_2^{\infty})$ and $M_{\Phi, \Lambda}M_{\S}$, where $M_{\S}:=(M,N_2^{\infty})$: 
$$B_{\Phi}(H(L_{\S}, M_{\Phi, \Lambda}M_{\S}))=C\prod_{p|N_2} B_{\phi_p}(h_p(l,m))$$
(recall that we set the normalisation $B_{\phi_p}(h_p(0,\ord_p(M_{\Phi, \Lambda})))=1$ for $p\nmid N_2$), and thus
$$B_{\Phi}(H(L,M))=B_{\Phi}(H(L_{\S}, M_{\Phi, \Lambda}M_{\S}))\prod_{\substack{p|LM\\ p\nmid N_2}} B_{\phi_p}(h_p(l,m))\, .$$
Using this equation with $L', M'$ as in the statement of the lemma, we obtain the relation \eqref{Besselrelation}. Note that without the assumption $M_{\Phi, \Lambda}|M'$ the statement is still true, but we have zeros on both sides of the equality.
\end{proof}

From the simple looking relation \eqref{Besselrelation} we obtain a correspondence between the Fourier coefficients \eqref{fouriercoefficientdefeq} that will be crucial for our applications. We start with an auxiliary lemma.

\begin{lemma}\label{lem:auxiliary}
Let $A\in \GL_2(\Q), \alpha\in\Q\ti, \gamma =\(\begin{smallmatrix} A & \\ & \alpha\T{A}^{-1} \end{smallmatrix}\)$, and let $\gamma_f$ be the image of $\gamma$ in $G(\A_f)$. Then for any automorphic form $\Phi$ on $G(\A)$, any matrix $T\in M_2^{\rm sym}(\Q)$ and $g_{\infty}\in G(\R)^+$ we have
$$\Phi_T(g_{\infty}\gamma_f)=\Phi_{\alpha^{-1}\T{A}TA}(\gamma_{\infty}^{-1} g_{\infty})\, ,$$
where $\gamma_{\infty}=\gamma\gamma_f^{-1}$.
\end{lemma}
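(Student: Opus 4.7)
The plan is to unwind the definition of the Fourier coefficient and exploit left $G(\Q)$-invariance of $\Phi$ to shift $\gamma_f$ across the unipotent $u(X)$, and then finish with a linear change of variables.

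Starting from the definition,
$$\Phi_T(g_\infty\gamma_f)=\int_{M_2^{\rm sym}(\Q)\backslash M_2^{\rm sym}(\A)}\psi^{-1}(\tr(TX))\,\Phi(u(X)g_\infty\gamma_f)\,dX,$$
I would observe that the diagonal embedding of $\gamma\in G(\Q)$ in $G(\A)$ factors as $\gamma=\gamma_\infty\gamma_f$, and apply left $G(\Q)$-invariance of $\Phi$ with $\gamma^{-1}$ to get $\Phi(u(X)g_\infty\gamma_f)=\Phi(\gamma^{-1}u(X)g_\infty\gamma_f)$. Because $g_\infty$ lives only at the infinite place and $\gamma_f$ only at the finite places, a place-by-place analysis, combined with the elementary identity $\gamma^{-1}u(Y)\gamma=u(\alpha A^{-1}Y\T{A}^{-1})$ obtained by direct matrix multiplication, converts both the finite-place contribution (where $\gamma^{-1}$ on the left and $\gamma$ on the right sandwich $u(X_p)$) and the archimedean one (after inserting $\gamma\gamma^{-1}$ between $u(X_\infty)$ and $g_\infty$) into a uniform conjugate of the unipotent. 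This yields
$$\Phi(u(X)g_\infty\gamma_f)=\Phi\!\left(u(\alpha A^{-1}X\T{A}^{-1})\,\gamma_\infty^{-1}g_\infty\right).$$

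Next I would change variables via $Y=\alpha A^{-1}X\T{A}^{-1}$. This is a $\Q$-rational linear automorphism of $M_2^{\rm sym}$, so by the product formula $|\det|_\A=1$ for $\Q$-rational endomorphisms, the adelic Haar measure on $M_2^{\rm sym}(\Q)\backslash M_2^{\rm sym}(\A)$ is preserved. The character transforms as
$$\psi^{-1}(\tr(TX))=\psi^{-1}(\alpha^{-1}\tr(\T{A}TAY))=\psi^{-1}(\tr((\alpha^{-1}\T{A}TA)Y)),$$
so the transformed integral is precisely $\Phi_{\alpha^{-1}\T{A}TA}(\gamma_\infty^{-1}g_\infty)$, as claimed.

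The main piece of care is the adelic bookkeeping in the conjugation step: one must track that $u(X)$ is adelic whereas $g_\infty$, $\gamma_\infty$, and $\gamma_f$ each live at complementary sets of places, so that the $\gamma^{\pm 1}$'s introduced by automorphy and by rewriting $g_\infty=\gamma\cdot(\gamma^{-1}g_\infty)$ at the archimedean place combine to produce the single pure translation $u(\alpha A^{-1}X\T{A}^{-1})$ uniformly across all places. Once this organisation is in place, the remaining matrix computation and the change of variables are routine.
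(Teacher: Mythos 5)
Your proof is correct and follows essentially the same route as the paper's: unwind the definition, use left $G(\Q)$-invariance together with the identity $\gamma^{-1}u(X)\gamma=u(\alpha A^{-1}X\T{A}^{-1})$ and the fact that $\gamma_f=\gamma\gamma_\infty^{-1}$, then conclude by the measure-preserving rational change of variables $X=\alpha^{-1}AY\T{A}$. No gaps.
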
 
\begin{proof}
Using the fact that $\Phi$ is left $G(\Q)$-invariant and the substitution\linebreak $X=\alpha^{-1}AY\T{A}$, we obtain
\begin{align*}
\Phi_T(g_{\infty}\gamma_f) &=\int_{M_2^{\rm sym}(\Q)\backslash M_2^{\rm sym}(\A)}\psi^{-1}({\tr}(TX))\Phi(u(X)g_{\infty}\gamma_f) dX\\
&=\int_{M_2^{\rm sym}(\Q)\backslash M_2^{\rm sym}(\A)}\psi^{-1}({\tr}(TX))\Phi(u(X)\gamma\gamma_{\infty}^{-1} g_{\infty}) dX\\
&=\int_{M_2^{\rm sym}(\Q)\backslash M_2^{\rm sym}(\A)}\psi^{-1}({\tr}(TX))\Phi(u(\alpha A^{-1}X\T{A}^{-1})\gamma_{\infty}^{-1} g_{\infty}) dX\\
&=\Phi_{\alpha^{-1}\T{A}TA}(\gamma_{\infty}^{-1} g_{\infty})\, .
\end{align*}
\end{proof}

\begin{lemma}\label{lem:Phi-to-a(F,T)}
Let $\Phi$ be an automorphic form on $G(\A)$ that satisfies the equation \eqref{eq:weight}, and let $F$ be as in \eqref{deadelisation}. Then for any matrix $T\in M_2^{\rm sym}(\Q)$ and $g_{\infty}\in G(\R)^+$ we have
$$\Phi_T(g_{\infty})=\mu(g_{\infty})^k j(g_{\infty},i_2)^{-k} a(F,T) e(\tr (T(g_{\infty}\cdot i_2)))\, .$$
\end{lemma}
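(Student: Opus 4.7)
The plan is to unfold the adelic Fourier integral defining $\Phi_T(g_\infty)$, reduce it to a classical integral over the compact torus $M_2^{\rm sym}(\Z)\backslash M_2^{\rm sym}(\R)$ by exploiting the right-invariance of $\Phi$ under $I_{N_1,N_2}$, and then substitute the Fourier expansion of $F$ and use orthogonality of characters to single out the $T$-th coefficient.

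First I would decompose $M_2^{\rm sym}(\A) = M_2^{\rm sym}(\Q) + \bigl(M_2^{\rm sym}(\R)\times\prod_p M_2^{\rm sym}(\Z_p)\bigr)$ (with intersection $M_2^{\rm sym}(\Z)$), so that a fundamental domain for $M_2^{\rm sym}(\Q)\backslash M_2^{\rm sym}(\A)$ may be taken of the form $\mathcal{F}_\infty\times\prod_p M_2^{\rm sym}(\Z_p)$, where $\mathcal{F}_\infty$ is a fundamental domain for $M_2^{\rm sym}(\Z)$ acting by translation on $M_2^{\rm sym}(\R)$. Splitting $X = X_\infty + X_f$ accordingly and using the factorization $\psi = \psi_\infty\cdot\prod_p\psi_p$, the integrand separates.

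Next, for any $X_f\in\prod_p M_2^{\rm sym}(\Z_p)$ the element $u(X_f)$ lies in $I_{N_1,N_2}$ (since the upper-right $2\times 2$ block in the definition of $I(n_1,n_2)$ is all of $M_2(\Z_p)$), and $u(X_f)$ commutes with $g_\infty\in G(\R)^+$ as they are supported at disjoint places. Thus right $I_{N_1,N_2}$-invariance of $\Phi$ yields $\Phi(u(X_\infty+X_f)g_\infty) = \Phi(u(X_\infty)g_\infty)$, and the finite-place part factors off as $\prod_p\int_{M_2^{\rm sym}(\Z_p)}\psi_p^{-1}(\tr(TX_p))\,dX_p$; with the standard normalization $\mathrm{vol}(M_2^{\rm sym}(\Z_p))=1$ each local factor equals $1$ if $T$ is half-integral at $p$ and $0$ otherwise. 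If $T$ is not half-integral at some prime, both sides of the claim vanish (the right-hand side under the convention $a(F,T):=0$), so we may assume $T$ is half-integral throughout.

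Finally, formula \eqref{deadelisation} together with the facts that $u(X_\infty)$ has lower blocks $C=0,\,D=1_2$ (so $\mu(u(X_\infty))=1$ and $j(u(X_\infty),W)=1$ for every $W$, and hence by the cocycle relation $j(u(X_\infty)g_\infty,i_2)=j(g_\infty,i_2)$) gives
$$\Phi(u(X_\infty)g_\infty) = F(Z+X_\infty)\,\mu(g_\infty)^k\,j(g_\infty,i_2)^{-k},\qquad Z := g_\infty\cdot i_2.$$
Substituting the Fourier expansion $F(Z+X_\infty) = \sum_{T'\in\mathcal{P}_2}a(F,T')\,e(\tr(T'Z))\,e(\tr(T'X_\infty))$ and interchanging sum and integral (justified by absolute convergence of the Fourier expansion of a cusp form on compacta), the inner integral $\int_{\mathcal{F}_\infty}e(\tr((T'-T)X_\infty))\,dX_\infty$ equals $1$ if $T'=T$ and $0$ otherwise, by orthogonality of characters on the torus $M_2^{\rm sym}(\Z)\backslash M_2^{\rm sym}(\R)$. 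This isolates the $T'=T$ term and produces the stated identity (with $a(F,T)=0$ understood whenever $T\notin\mathcal{P}_2$). No substantive obstacle arises; the only care needed is in matching measure normalizations and justifying the interchange of sum and integral, both of which are entirely standard.
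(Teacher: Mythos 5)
Your proposal is correct and follows essentially the same route as the paper's proof: unfold the adelic integral, reduce to the compact real torus $M_2^{\rm sym}(\Z)\backslash M_2^{\rm sym}(\R)$ using the right invariance under $u(X_f)$ with $X_f$ integral, pass to $F$ via \eqref{deadelisation} and the cocycle property of $j$, and finish by orthogonality on the Fourier expansion. You merely make explicit the finite-place factorization and the degenerate case of non-half-integral $T$, which the paper leaves implicit.
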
 
\begin{proof}
It is easy to check that the automorphy factor $j$ has the property
$$j(g_1g_2, Z)=j(g_1,g_2\cdot Z)j(g_2,Z)\, .$$
Hence,
\begin{align*}
\Phi_T(g_{\infty})&=\int_{M_2^{\rm sym}(\Q)\backslash M_2^{\rm sym}(\A)}\psi^{-1}({\tr}(TX))\Phi(u(X)g_{\infty}) dX\\
&=\int_{M_2^{\rm sym}(\Z)\backslash M_2^{\rm sym}(\R)} e(-\tr(TX))\mu(g_{\infty})^k j(u(X)g_{\infty},i_2)^{-k} F(u(X)g_{\infty}\cdot i_2) dX\\
&=\mu(g_{\infty})^k j(g_{\infty},i_2)^{-k} \sum_{T'}a(F,T')\\
&\hspace{0.4cm}\int_{M_2^{\rm sym}(\Z)\backslash M_2^{\rm sym}(\R)} e(-\tr(TX)) e(\tr (T'(g_{\infty}\cdot i_2))) e(\tr (T'X)) dX\\
&=\mu(g_{\infty})^k j(g_{\infty},i_2)^{-k} a(F,T)e(\tr (T(g_{\infty}\cdot i_2)))\, .
\end{align*}
\end{proof}

\begin{thm}\label{thm:the_relation}
Let $\pi$ be an irreducible automorphic cuspidal representation of $G(\A)$ with trivial central character and $\Phi\in V_\pi$ an automorphic form in its vector space. Assume that $\Phi$ is right $I_{N_1,N_2}$-invariant for some $N_1|N_2$. Let $S=S(d)$, and $\psi$ as above. Let $\Lambda$ be a character of $T(\A )/T(\Q )T(\R )$ such that $\Lambda|_{\A^\times} = 1$. Then for any $L,M,L',M'$ satisfying the conditions of Lemma \ref{lem:Besselrelation} and such that $C(\Lambda)|M'N_1$, we have the following correspondence between the Fourier coefficients of $F$:
\begin{multline}\label{the_relation}
\hsp{0.3}\frac{|\Cl_d(M'N_1)|}{|\Cl_d(MN_1)|}\sum_{T\in H_1(dM^2,L;\G^0(N_1))}\hsp{0.1}\Lambda^{-1}(T) a(F,T) \prod_{\substack{p\mid L'M'\\ p\nmid N_2}} B_{\phi_p}(h_p(l',m'))\\ 
=\sum_{T'\in H_1(dM'^2,L';\G^0(N_1))}\hsp{0.1}\Lambda^{-1}(T')  a(F,T') \prod_{\substack{p\mid LM\\ p\nmid N_2}} B_{\phi_p}(h_p(l,m)).
\end{multline} 
\end{thm}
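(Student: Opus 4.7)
The plan is to evaluate $B_\Phi(H(L,M))$ and $B_\Phi(H(L',M'))$ explicitly as sums of Fourier coefficients of $F$, and then plug the results into the local-global identity \eqref{Besselrelation}. The archimedean contribution and the volume factor will be common to both values and will cancel, leaving exactly the desired formula. Throughout, fix a positive integer $N'$ divisible by $MN_1$, $M'N_1$ and $C(\Lambda)$, so that $\Cl_d(N') = T(\A)/T(\Q)T(\R)T_{N'}$ parametrises all of the orbits that will appear.

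The first step is to unfold the Bessel integral. Since $R=TU$ and $(\Lambda\otimes\theta)(tu)=\Lambda(t)\theta(u)$, the $U$-integration gives the Fourier coefficient $\Phi_S$ and one obtains
\[
B_\Phi(H(L,M)) = \int_{\A^\times T(\Q)\backslash T(\A)} \Lambda^{-1}(t)\,\Phi_S(t\,H(L,M))\,dt.
\]
Because $\Lambda$ is trivial on $\A^\times T(\R)$ and $\Phi$ is right $I_{N_1,N_2}$-invariant, the integrand is constant on cosets of $T_{N'}$, and the integral collapses to a finite sum
\[
B_\Phi(H(L,M)) = \mathrm{vol}(T_{N'})\sum_{c\in\Cl_d(N')}\Lambda^{-1}(c)\,\Phi_S\bigl(t_c\,H(L,M)\bigr),
\]
with $t_c\in\prod_{p<\infty}T(\Q_p)$ a fixed set of representatives as in Section~\ref{sec:ray_class_groups}.

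The second step is to rewrite each summand as a Fourier coefficient of $F$. Using strong approximation, write $t_c=\gamma_c m_c\kappa_c$ as in \eqref{S_c}, so that (via the Siegel embedding) $t_c^G\,H(L,M) = \gamma_c^G\cdot m_c^G\cdot\kappa_c^G H(L,M)$, where $\gamma_c^G\in G(\Q)$, $m_c^G$ is archimedean, and $\kappa_c^G H(L,M)$ is finite. Left $G(\Q)$-invariance of $\Phi$ kills $\gamma_c^G$; next, Lemma~\ref{lem:auxiliary}, applied with the rational element corresponding to the scaling $\mathrm{diag}(M,1)$ supplied by $H(L,M)$, transfers the remaining finite part to the archimedean side and replaces $S$ by
$L\,\mathrm{diag}(M,1)\,S_c\,\mathrm{diag}(M,1) = \phi_{L,M}(c)$, exactly the matrix constructed in \eqref{def:phi_{L,M}}. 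Finally Lemma~\ref{lem:Phi-to-a(F,T)} converts the remaining archimedean value to $a(F,\phi_{L,M}(c))$ times an archimedean factor depending only on $(L,M)$ and not on $c$. Hence
\[
B_\Phi(H(L,M)) = C_\infty(L,M)\,\mathrm{vol}(T_{N'})\sum_{c\in\Cl_d(N')}\Lambda^{-1}(c)\,a(F,\phi_{L,M}(c)),
\]
and the remark after Proposition~\ref{phi_injective} collapses this (using $C(\Lambda)\mid MN_1$) to
\[
B_\Phi(H(L,M)) = C_\infty(L,M)\,\mathrm{vol}(T_{N'})\,\frac{|\Cl_d(N')|}{|\Cl_d(MN_1)|}\sum_{T\in H_1(dM^2,L;\G^0(N_1))}\Lambda^{-1}(T)\,a(F,T).
\]

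The third step is to substitute this formula, and its analogue with $(L,M)$ replaced by $(L',M')$, into \eqref{Besselrelation}. The common factor $\mathrm{vol}(T_{N'})|\Cl_d(N')|$ cancels, the archimedean factor $C_\infty(L,M)$ on one side matches $C_\infty(L',M')$ on the other because the normalisation $B^{(0)}_{\pi_p}(h_p(0,c(\Lambda_p)))=1$ absorbs exactly the archimedean discrepancy (this is the whole point of the hypotheses $(L,N_2^\infty)=(L',N_2^\infty)$, $(M,N_2^\infty)=(M',N_2^\infty)$, and $M_{\Phi,\Lambda}\mid M'$ in Lemma~\ref{lem:Besselrelation}), and what remains is precisely \eqref{the_relation}. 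The main obstacle in the argument is the bookkeeping in the second step: verifying carefully that the combination of strong approximation, Lemma~\ref{lem:auxiliary}, and the scaling introduced by $H(L,M)$ produces exactly the matrix $\phi_{L,M}(c)$ in the $a(F,\cdot)$, including the correct factors of $L$ and $\mathrm{diag}(M,1)$; this is where the construction of $\tilde\phi_{L,M}$ in Section~\ref{sec:ray_class_groups} and the condition $M_{\Phi,\Lambda}\mid M'$ must all be aligned.
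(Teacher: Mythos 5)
Your overall strategy coincides with the paper's: decompose $T(\A)$ into cosets of a ray class group, use strong approximation together with Lemma~\ref{lem:auxiliary} and Lemma~\ref{lem:Phi-to-a(F,T)} to evaluate $B_\Phi(H(L,M))$ as an explicit multiple of $\sum_{T}\Lambda^{-1}(T)a(F,T)$, and substitute into \eqref{Besselrelation}. Your variations are harmless: working with one common modulus $N'$ and collapsing via the remark after Proposition~\ref{phi_injective} gives the same ratio of class numbers that the paper obtains by using $\Cl_d(MN_1)$ and $\Cl_d(M'N_1)$ directly, and suppressing the integral over $\R^\times\backslash T(\R)$ only hides a constant $r=\int_{\R^\times\backslash T(\R)}dt_\infty$ that is independent of $c$ and of $(L,M)$.

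The genuine problem is your third step. Lemma~\ref{lem:Phi-to-a(F,T)} applied to $g_\infty=H_\infty^{-1}m_ct_\infty$ produces the automorphy factor $\mu(H_\infty^{-1})^k j(H_\infty^{-1},i_2)^{-k}=(LM)^{-k}$, so your constants are $C_\infty(L,M)=r\,(LM)^{-k}e^{-2\pi\tr S}$ and $C_\infty(L',M')=r\,(L'M')^{-k}e^{-2\pi\tr S}$. Since $L/L'$ and $M/M'$ are only required to be coprime to $N_2$, these do \emph{not} match, and the quotient $\left(\frac{L'M'}{LM}\right)^k$ survives into the final identity --- it is exactly the factor visible in \eqref{eq:the_relation_simple} of Corollary~\ref{cor:the_relation_easy}. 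Your justification that the normalisation $B^{(0)}_{\pi_p}(h_p(0,c(\Lambda_p)))=1$ ``absorbs the archimedean discrepancy'' cannot work: that normalisation fixes vectors in the local Bessel models at the finite places $p\nmid N_2$ and is already consumed in deriving \eqref{Besselrelation}; it has no bearing on an archimedean power of $LM$. As written, the cancellation you assert is unjustified, and carrying the computation out honestly yields the relation with the extra factor $\left(\frac{L'M'}{LM}\right)^k$ on the left-hand side (i.e.\ the corollary's form of the identity) rather than the display \eqref{the_relation} exactly as printed; you need either to track this factor explicitly or to explain where it is supposed to disappear.
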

\begin{proof}
In view of the relation \eqref{Besselrelation}, it suffices to compute the values $B_\Phi (H(L,M))$.

Let $\Phi^{L,M}(g):=\Phi(gH(L,M))$ for $g\in G(\A)$; because $\Phi$ is right $I_{N_1,N_2}$-invariant, $\Phi^{L,M}$ is right invariant by
$$H_{\infty} I_{N_1,N_2} H_{\infty}^{-1}=\{\prod_{p<\infty}\(\begin{smallmatrix} * & MN_1* & LM^2* & LM*\\ */M & * & LM* & L*\\ *N_2/LM^2 & *N_2/LM & * & */M\\ *N_2/LM & *N_2/L & MN_1* & * \end{smallmatrix}\)\in G(\Q_p ):*\in\Z_p\} ,$$
where $H_{\infty}:=\(\begin{smallmatrix} LM^2 & & & \\ & LM & & \\ & & 1 & \\ & & & M \end{smallmatrix}\)\in G(\R )^+$. In particular, $\Phi^{L,M}$ is right invariant by $T_{MN_1}$ and $K^*_{MN_1}$.

Following the notation of section \ref{sec:ray_class_groups}, we can write  
$$T(\A)=\coprod_{c\in\Cl_d(MN_1)} t_c T(\Q)T(\R)T_{MN_1},$$
where we choose $t_c\in\prod_{p<\infty} T(\Q_p)$, and by strong approximation theorem write $t_c=\gamma_c m_c\kappa_c$ with $\gamma_c\in\GL_2(\Q)$, $m_c\in\GL_2(\R)^+$ and $\kappa_c\in K^*_{MN_1}$.

With this preparation we are ready to compute the values $B_{\Phi}(H(L,M))$.
\begin{multline*}
B_{\Phi}(H(L,M)) 
= \int_{\A\ti T(\Q )\back T(\A )}\hsp{0.5}\Lambda^{-1}(t)\Phi_S^{L,M}(t)dt\\
=\hsp{0.2} \sum_{c\in\Cl_d(MN_1)} \int_{\R\ti\back T(\R )} \Phi_S^{L,M}(t_ct_{\infty}) dt_{\infty} \int_{\A_f\ti T(\Q )\cap T_{MN_1}\back T_{MN_1}}\hsp{0.5} \Lambda^{-1}(t_ct_{\infty}t_{MN_1}) dt_{MN_1}
\end{multline*}
Observe that if $C(\Lambda)\nmid MN_1$, then the inner integral is equal to zero and the equation \eqref{the_relation} holds. Henceforth we assume that $C(\Lambda)|MN_1$. With this assumption and using Lemma \ref{lem:auxiliary} twice, we have
\begin{align*}
B_{\Phi}(H(L,M)) &=\frac{1}{|\Cl_d(MN_1)|}\sum_{c\in\Cl_d(MN_1)}\hsp{0.4} \Lambda^{-1}(t_c)\int_{\R\ti\back T(\R )}\hsp{0.4} \Phi_S^{L,M}(\gamma_c m_ct_{\infty}) dt_{\infty}\\
&= \frac{1}{|\Cl_d(MN_1)|}\sum_{c\in\Cl_d(MN_1)}\hsp{0.4} \Lambda^{-1}(t_c)\int_{\R\ti\back T(\R )}\hsp{0.4} \Phi_S^{L,M}(t_{\infty}(\gamma_c)_f)dt_{\infty}\\
&= \frac{1}{|\Cl_d(MN_1)|}\sum_{c\in\Cl_d(MN_1)}\hsp{0.4} \Lambda^{-1}(t_c)\int_{\R\ti\back T(\R )}\hsp{0.4} \Phi_{S_c}^{L,M}(m_ct_{\infty})dt_{\infty}\\
&= \frac{1}{|\Cl_d(MN_1)|}\sum_{c\in\Cl_d(MN_1)}\hsp{0.4} \Lambda^{-1}(t_c)\int_{\R\ti\back T(\R )}\hsp{0.4} \Phi_{\phi_{L,M}(c)} (H_{\infty}^{-1}m_ct_{\infty})dt_{\infty}\, ,
\end{align*}
where $\phi_{L,M}(c)$ is defined as in \eqref{def:phi_{L,M}}. Further,
\begin{align*}
\int_{\R\ti\back T(\R )} \Phi_{\phi_{L,M}(c)}(H_{\infty}^{-1} m_ct_{\infty}) dt_{\infty}\\
&\hspace{-4.5cm}\stackrel{\rm{Lemma}\, \ref{lem:Phi-to-a(F,T)}}{=}(LM)^{-k} a(F, \phi_{L,M}(c))\\
&\hspace{-2.7cm} \cdot\int_{\R\ti\back T(\R )}e(\tr (\phi_{L,M}(c) (H_{\infty}^{-1}m_ct_{\infty}\cdot i_2))) dt_{\infty}\\
&\hspace{-3.9cm} =(LM)^{-k} a(F, \phi_{L,M}(c)) \int_{\R\ti\back T(\R )} e(\tr (S_c (m_ct_{\infty}\cdot i_2))) dt_{\infty}\\
&\hspace{-3.9cm} =(LM)^{-k} a(F,\phi_{L,M}(c)) \int_{\R\ti\back T(\R )} e(\tr (Si_2))dt_{\infty}\\
&\hspace{-3.9cm} =r(LM)^{-k} a(F,\phi_{L,M}(c)) e^{-2\pi\tr S} ,
\end{align*}
where $r=\int_{\R\ti\back T(\R )} dt_{\infty}$. Finally, recall that 
$$\{\phi_{L,M}(c)\colon c\in\Cl_d(MN_1)\} =H_1(dM^2,L;\G^0(N_1)) .$$
\end{proof}

\begin{cor}\label{cor:the_relation_easy}
	Let $F$ be a Siegel cusp form of degree $2$, level $\G_0(N_1,N_2)$ and weight $k$. Suppose that $F$ is an eigenform of the local Hecke algebra at all primes $p\nmid N_2$\footnote{It is enough to assume that $F$ is an eigenform of the Hecke operators $T(p)$ and $T(p^2)$ at $p\nmid N_2$. For the definition of these Hecke operators see for example \cite{andzhu}.}. Let $d$ be a fundamental discriminant and let $L,M,L',M'$ be positive integers such that $L'|L$, $M'|M$, $(L,N_2^{\infty})=(L',N_2^{\infty})$ and $(M,N_2^{\infty})=(M',N_2^{\infty})$. Then for all characters $\Lambda$  of $H_1(dM'^2,L,\G^0(N_1))\cong\Cl_d(M'N_1)$,
	\begin{multline}\label{eq:the_relation_simple}
	\hsp{0.2}\frac{|\Cl_d(M'N_1)|}{|\Cl_d(MN_1)|}\(\frac{L'M'}{LM}\)^k\sum_{T\in H_1(dM^2,L;\G^0(N_1))}\hsp{0.3}\Lambda^{-1}(T) a(F,T) \prod_{\substack{p\mid L'M'\\ p\nmid N_2}} B_{\phi_p}(h_p(l',m'))\\ 
	=\sum_{T'\in H_1(dM'^2,L';\G^0(N_1))}\hsp{0.3}\Lambda^{-1}(T')  a(F,T') \prod_{\substack{p\mid LM\\ p\nmid N_2}} B_{\phi_p}(h_p(l,m))\, .
	\end{multline} 
\end{cor}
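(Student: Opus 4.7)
The plan is to reduce the corollary to Theorem \ref{thm:the_relation} by decomposing $F$ (or rather its adelisation) into pieces coming from irreducible cuspidal automorphic representations. Passing from $F$ to its adelisation $\Phi$ via \eqref{deadelisation}, the footnote in Section 2 allows us to write $\Phi = \sum_i \Phi_i$, where each $\Phi_i$ generates an irreducible cuspidal automorphic representation $\pi_i$ of $G(\A)$ with trivial central character, archimedean component $L(k,k)$, and right $I_{N_1,N_2}$-invariance. Since $F$ is a Hecke eigenform at every $p\nmid N_2$, the local components $\pi_{i,p}$ at such $p$ all share the same Satake parameters. By Sugano's theorem (Theorem \ref{thm:Sugano}) the normalised spherical Bessel values $B_{\phi_{i,p}}(h_p(l,m))$ then coincide across $i$, so I may write them unambiguously as $B_{\phi_p}(h_p(l,m))$ and pull these factors out of any sum over $i$.

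Next I would apply Theorem \ref{thm:the_relation} to each $\Phi_i$, tracking the prefactor $(L'M'/LM)^k$ appearing in \eqref{eq:the_relation_simple}. This prefactor arises naturally from the proof of that theorem: Lemma \ref{lem:Phi-to-a(F,T)} contributes $\mu(H_\infty^{-1})^k j(H_\infty^{-1},i_2)^{-k} = (LM)^{-k}$ to $B_{\Phi_i}(H(L,M))$ and the analogous $(L'M')^{-k}$ to $B_{\Phi_i}(H(L',M'))$, and substituting both expressions into the local-global identity \eqref{Besselrelation} of Lemma \ref{lem:Besselrelation} produces the ratio $(L'M'/LM)^k$ in the resulting Fourier coefficient relation. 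The condition $C(\Lambda)\mid M'N_1$ required by the theorem is automatic since by hypothesis $\Lambda$ is taken to be a character of $\Cl_d(M'N_1)$. For indices $i$ where $\pi_i$ admits no global $(\Lambda,\theta)$-Bessel model, both sides of the corresponding identity vanish by Lemma \ref{lem:Besselrelation} and may be disregarded.

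Summing over $i$ then completes the proof: on each side the common products of $B_{\phi_p}(h_p(\cdot,\cdot))$ factor out, while $\sum_i a(F_i, T) = a(F,T)$, where $F_i$ is the Siegel modular form attached to $\Phi_i$ via \eqref{deadelisation}, reassembles the Fourier coefficients of $F$. The main delicate point, which I expect to be the principal obstacle, is making the decomposition $\Phi=\sum_i \Phi_i$ genuinely compatible with the set-up of Theorem \ref{thm:the_relation}: one must ensure that each $\Phi_i$ can be chosen as a pure tensor away from $N_2$ whose local Bessel vectors at $p\nmid N_2$ agree with the Sugano-normalised spherical vectors, so that the pull-out of common factors is legitimate and independent of $i$. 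This rests on the uniqueness of local Bessel functionals together with the freedom to rescale each $\Phi_i$ by a constant, but it is the step that most warrants care.
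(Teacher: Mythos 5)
Your argument is correct and follows essentially the same route as the paper: decompose $F=\sum_i F_i$ into pieces generating irreducible cuspidal representations whose local components at $p\nmid N_2$ agree (the paper invokes \cite[Proposition 3.11]{sahapet} for exactly the equivalence you derive from shared Satake parameters), apply Theorem \ref{thm:the_relation} to each piece, and sum, using the common normalised spherical Bessel values to factor the local products out of the sum over $i$. Your tracing of the prefactor $\(\frac{L'M'}{LM}\)^k$ back to the $(LM)^{-k}$ arising from Lemma \ref{lem:Phi-to-a(F,T)} in the proof of Theorem \ref{thm:the_relation} is also correct and is precisely where that factor comes from.
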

\begin{proof}
	This follows immediately from Theorem \ref{thm:the_relation} and Proposition 3.11, \cite{sahapet}, which states that in our setting the following conditions are equivalent:
	\begin{itemize}
		\item[(i)] $F$ is an eigenform of the local Hecke algebra at all primes $p\nmid N_2$.
		\item[(ii)] If $\pi',\pi''$ are two irreducible cuspidal representations both of which occur as subrepresentations of the representation $\pi$ associated with $F$, then $\pi_p'\cong\pi_p''$ for all primes $p\nmid N_2$.
	\end{itemize}
	As a result, $F=\sum_i F_i$, where each $F_i$ has the same local data at $p\nmid N_2$ and is as in Theorem \ref{thm:the_relation}.
\end{proof}
\section{Maass relations}
Let $F$ be a cuspidal Siegel modular form invariant under the action of $\G_0(N_1,N_2)$ that is an eigenform of the local Hecke algebra at primes $p\nmid N_2$. Let $\Phi$ be the adelisation of $F$, and $\pi=\otimes_p\pi_p$ the associated automorphic representation. Suppose that for primes $p\nmid N_2$, $\pi_p=\chi_p\triv_{\GL(2)}\rtimes\chi_p^{-1}$ with an unramified character $\chi_p$ of $\Q_p^\times$ (a representation of type IIb according to Table \ref{table2}). Note that these are non-tempered, non-generic representations. 
The set of $\pi$ obtained in this way is precisely the set of CAP representations attached to the Siegel parabolic subgroup of $G(\A)$ (cf. \cite{explicit_bocherer}).

\begin{lemma}\label{T_invariance}
For representation $\pi$ as above, any vector $\tilde\Phi =\otimes_p\tilde\phi_p$ in the vector space $V_{\pi}$ of $\pi$ and any non-degenerate matrix $S\in M_2^{\rm sym}(\Q)$, we have:
$$\tilde\Phi_S(tg)=\tilde\Phi_S(g)\quad\mbox{for all}\quad g\in G(\A)\quad\mbox{and}\quad t\in\prod_{p\nmid N_2} T_S(\Q_p)\prod_{p|N_2} 1_4\, .$$
\end{lemma}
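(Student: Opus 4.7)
The plan is to Fourier-analyse the function $t\mapsto\tilde\Phi_S(tg)$ on the compact abelian group $\A^\times T(\Q)\backslash T(\A)$---compact because the fundamental discriminant $d<0$ makes $L=\Q(\sqrt{d})$ imaginary quadratic and thus $T\cong\mathrm{Res}_{L/\Q}\mathbb{G}_m$---identify the Fourier coefficients with global Bessel periods, and then invoke Table \ref{table2} at the primes $p\nmid N_2$ to force trivial local characters.

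For fixed $g$ the function $f_g(t):=\tilde\Phi_S(tg)$ is smooth, $\A^\times$-invariant (trivial central character) and left $T(\Q)$-invariant: indeed, for $A\in T_S(\Q)$ the change of variable $X\mapsto (\det A)^{-1}AX\,{}^tA$ on $M_2^{\rm sym}$ has trivial global Jacobian (product formula), and the identity ${}^tASA=\det(A)\cdot S$ preserves $\psi(\tr(SX))$, while $\tilde\Phi$ itself is left $G(\Q)$-invariant. Hence $f_g$ descends to the compact quotient and admits a Fourier expansion
$$\tilde\Phi_S(tg)=\sum_{\Lambda}c_\Lambda(g)\,\Lambda(t),\qquad c_\Lambda(g)=\int_{\A^\times T(\Q)\backslash T(\A)}\Lambda^{-1}(t)\,\tilde\Phi_S(tg)\,dt,$$
with $\Lambda$ ranging over characters of $T(\A)/T(\Q)\A^\times$. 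Unfolding $\tilde\Phi_S$ inside $c_\Lambda$ and applying Fubini (using that $T_S$ preserves $\theta$ under conjugation by the same ${}^tASA=\det(A)S$ calculation) gives the identification $c_\Lambda(g)=B_{\tilde\Phi}(g)$, the global $(\Lambda,\theta)$-Bessel period.

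If $c_\Lambda\not\equiv 0$, then the global Bessel model of $\pi$ of type $(\Lambda,\theta)$ is nonzero, so by factorisation and uniqueness each local Bessel functional on $\pi_p$ of type $(\Lambda_p,\theta_p)$ is nonzero. At $p\nmid N_2$ we have $\pi_p\cong\chi_p\triv_{\GL_2}\rtimes\chi_p^{-1}$ of type IIb with $\chi_p\sigma_p=\chi_p\chi_p^{-1}=\triv$, so Table \ref{table2} forces $\Lambda_p=(\chi_p\sigma_p)\circ\Norm=\triv$. Consequently, for $t$ with $t_p\in T_S(\Q_p)$ at $p\nmid N_2$ and $t_\infty=1_4$, $t_p=1_4$ at $p\mid N_2$, every contributing $\Lambda$ satisfies $\Lambda(t)=\prod_{p\nmid N_2}\Lambda_p(t_p)=1$. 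Therefore
$$\tilde\Phi_S(tg)=\sum_\Lambda c_\Lambda(g)\,\Lambda(t)=\sum_\Lambda c_\Lambda(g)=\tilde\Phi_S(g),$$
the last equality obtained by setting $t=1_4$ in the Fourier expansion.

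The main obstacle is conceptual rather than technical: the argument relies entirely on the rigidity encoded in Table \ref{table2}---that for type IIb with trivial central character \emph{only} the trivial Bessel character is admissible. Without this (for instance at a place where $\pi_p$ were of type I or type IIa), nontrivial $\Lambda_p$ would survive and $T_S(\Q_p)$-invariance would fail. The Fourier expansion step itself is routine harmonic analysis on a compact abelian group, using that $f_g$ is smooth and $K_\infty$-finite so that the series converges absolutely.
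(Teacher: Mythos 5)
Your argument is correct in substance but follows a genuinely different route from the paper. The paper works entirely with local functionals: it forms $\beta(\Psi)=\Psi_S(1_4)$, uses the pure-tensor structure to extract nonzero $(U(\Q_p),\theta_S)$-equivariant functionals $\beta_p$ at each $p\nmid N_2$, and then invokes \cite[Lemma 4.1]{PSSc1} (whose hypotheses are checked via \cite[Corollary 4.2]{pitsch2}), which says that for type IIb and for the archimedean lowest weight representation the space of such functionals is one-dimensional and \emph{automatically} $T(\Q_p)$-invariant. You instead expand $t\mapsto\tilde\Phi_S(tg)$ spectrally over the compact quotient $\A^\times T(\Q)\backslash T(\A)$, identify the coefficients $c_\Lambda$ with global Bessel periods (your computation of the $T(\Q)$-invariance and of the Bessel transformation property of $c_\Lambda$ is correct, and conjugation by $T$ is unimodular on $U$, so the measure bookkeeping works out), and then use Table \ref{table2} to force $\Lambda_p=\triv$ at $p\nmid N_2$. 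The inputs are essentially equivalent --- one-dimensionality plus ``only $\Lambda=\triv$ admits a Bessel functional for IIb'' is the content of \cite[Lemma 4.1]{PSSc1} --- but your version makes the mechanism transparent (the only Bessel characters surviving globally are unramified-trivial at the good places) and extends without change to vectors that are not pure tensors.

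Two caveats keep your proof short of the literal statement. First, the lemma allows \emph{any} non-degenerate $S\in M_2^{\rm sym}(\Q)$, whereas your Fourier expansion needs $\A^\times T(\Q)\backslash T(\A)$ compact, i.e.\ $\disc S<0$ (note that fundamentality of $d$ is irrelevant here, only its sign). For indefinite $S$ the quotient is non-compact and the discrete expansion is unavailable; you must dispose of that case separately by observing that $\tilde\Phi_S\equiv 0$ because $\pi_\infty=L(k,k)$ admits no $\theta_S$-functional unless $S$ is definite (this is exactly what the citation of \cite[Corollary 4.2]{pitsch2} supplies in the paper's proof). Second, you set $t_\infty=1_4$, while the paper's proof establishes invariance under $T(\R)$ as well (again via the archimedean case of \cite[Lemma 4.1]{PSSc1}); in your framework this amounts to checking that $\Lambda_\infty$ is also forced to be trivial for contributing $\Lambda$. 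Neither issue affects the only use of the lemma later in the paper, where $S=S(d)$ with $d<0$ and $t$ is supported at finite places, but both should be flagged if the lemma is to be proved as stated.
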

\begin{proof}
Let $\tilde\Phi=\otimes_p\tilde\phi_p$ be as in the lemma, and let $\S=\{ p:p|N_2\}$. Without loss of generality we may assume $g=1_4$. Let $V^{\S}$ be the subspace of $V_{\pi}$ generated by all vectors of the form $\otimes_{p\in\S}\tilde\phi_p\otimes_{p\notin\S}\psi_p$ with $\psi_p\in V_{\pi_p}$. The right action of $\otimes_{p\notin\S} G(\Q_p)$ on $V^{\S}$ makes $V^{\S}$ a representation isomorphic to $\otimes_{p\notin\S}\pi_p$ (we put here $\Q_{\infty}:=\R$). Define 
$$\beta\colon V^{\S}\to\C\, ,\qquad \beta(\Psi):=\Psi_S(1_4)=\int_{U(\Q )\back U(\A)} \Psi(u)\theta_S^{-1}(u) du\, .$$ 
Note that
$\beta(\pi(t)\Psi)=\Psi_S(t)$. We need to show that $\beta(\pi(t)\tilde\Phi)=\beta(\tilde\Phi)$ for all $t\in\prod_{p\notin\S} T(\Q_p)$. This is trivial if $\beta\equiv 0$. So assume $\beta\not\equiv 0$. Let 
$$\Phi'=\otimes_{p\in\S}\tilde\phi_p\otimes_{p\notin\S}\phi_p'\quad\mbox{ be such that } \quad\beta(\Phi')\neq 0\, .$$ 
For each place $p\notin\S$ we get a functional $\beta_p$ on $V_{\pi_p}$ via 
$$\beta_p(\psi_p):=\beta(\psi_p\otimes_{q\in\S}\tilde\phi_q\otimes_{q\notin\S\cup\{ p\}} \phi_q')\, .$$ 
Then $\beta_p(\phi_p')\neq 0$ and thus $\beta_p$ is a non-zero functional on $V_{\pi_p}$. Clearly, $\beta_p$ satisfies
$$\beta_p(\pi_p(u)\psi_p)=\theta_S(u)\beta_p(\psi_p)\qquad\mbox{for all}\quad \psi_p\in V_{\pi_p}\mbox{ and } u\in U(\Q_p)$$
By \cite[Corollary 4.2]{pitsch2}, the matrix $S$ satisfies the conditions of \cite[Lemma 4.1]{PSSc1}, and therefore by this lemma
\begin{itemize}
\item the space of such functionals $\beta_p$ is one-dimensional,
\item $\beta_p(\pi_p(t)\psi_p)=\beta_p(\psi_p)$ for all $\psi_p\in V_{\pi_p}$ and $t\in T(\Q_p)$.
\end{itemize}
So there exists a constant $C_{\S}$ such that 
$$\beta(\Psi)=C_{\S}\prod_{p\notin\S} \beta_p(\psi_p)$$
whenever $\Psi\in V^{\S}$ corresponds to $\otimes_{p\in\S}\tilde\phi_p\otimes_{p\notin\S}\psi_p$. Hence $\beta(\pi(t)\tilde\Phi)=\beta(\tilde\Phi)$ for all $t\in\prod_{p\notin\S} T(\Q_p)$.
\end{proof}

\begin{lemma}
Let $F, N_1$ be as above, $S=S(d)$, and $L,M$ any positive integers. Then for any $c_1,c_2\in\Cl_d(MN_1)$,
$$a(F,L\(\begin{smallmatrix} M &  \\  & 1\\\end{smallmatrix}\)S_{c_1}\(\begin{smallmatrix} M &  \\  & 1\\\end{smallmatrix}\) )=a(F,L\(\begin{smallmatrix} M &  \\  & 1\\\end{smallmatrix}\)S_{c_2}\(\begin{smallmatrix} M &  \\  & 1\\\end{smallmatrix}\) )\, .$$
\end{lemma}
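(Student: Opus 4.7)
The plan is to extract a pointwise identity from the chain of equalities already appearing in the proof of Theorem \ref{thm:the_relation}, and then invoke Lemma \ref{T_invariance} to exploit the type IIb structure at primes $p\nmid N_2$. As a first step, I would note that every link in that chain works pointwise in $t_\infty\in T(\R)$, not only after integration: right-invariance of $\Phi_S^{L,M}$ under $K^*_{MN_1}$ (which kills $\tilde\kappa_c$), two applications of Lemma \ref{lem:auxiliary}, Lemma \ref{lem:Phi-to-a(F,T)}, the collapse of $\mu^kj^{-k}$ to $(LM)^{-k}$, and the reduction of $\tr(\phi_{L,M}(c)(H_\infty^{-1}m_ct_\infty\cdot i_2))$ to $\tr(Si_2)$ via $\T{m_c}S_cm_c=\det(\gamma_c)^{-1}S$ and $\T{t_\infty}St_\infty=\det(t_\infty)S$, together deliver, for every $c\in\Cl_d(MN_1)$, every representative $t_c\in\prod_{p<\infty}T(\Q_p)$ of $c$, and every $t_\infty\in T(\R)$,
\begin{equation}\label{eq:pointwiseplan}
\Phi_S\bigl(t_ct_\infty H(L,M)\bigr) \;=\; (LM)^{-k}\,a(F,\phi_{L,M}(c))\,e^{-2\pi\tr S}.
\end{equation}

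Next, for each class $c\in\Cl_d(MN_1)$ I would choose a representative $t_c$ lying in $\prod_{p\nmid N_2}T(\Q_p)$, that is, with trivial components at all primes dividing $N_2$. Such a choice exists: under the identification of $\Cl_d(MN_1)$ with a ray class group of $L=\Q(\sqrt d)$, every class contains prime ideals of $L$ lying above rational primes $\ell\nmid N_2$, which is standard via the Chebotarev density theorem (equivalently, the natural map from $L$-divisors supported away from $N_2$ onto $\Cl_d(MN_1)$ is surjective). The value of the left-hand side of \eqref{eq:pointwiseplan} is insensitive to the choice of representative within the class $c$: right-invariance of $\Phi_S^{L,M}$ under $T_{MN_1}$ absorbs the $T_{MN_1}$-ambiguity, and left-$T(\Q)$-invariance of $\Phi_S$, which follows from $\T\gamma S\gamma=\det\gamma\cdot S$ for $\gamma\in T(\Q)$, absorbs the $T(\Q)$-ambiguity.

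With $t_c\in\prod_{p\nmid N_2}T(\Q_p)$ so chosen, Lemma \ref{T_invariance} applies and strips $t_c$ off on the left, yielding
$$\Phi_S\bigl(t_ct_\infty H(L,M)\bigr)=\Phi_S\bigl(t_\infty H(L,M)\bigr),$$
whose right-hand side is manifestly independent of $c$. Plugging this back into \eqref{eq:pointwiseplan} gives
$$a(F,\phi_{L,M}(c)) \;=\; (LM)^k\,e^{2\pi\tr S}\,\Phi_S\bigl(t_\infty H(L,M)\bigr),$$
the same value for every $c\in\Cl_d(MN_1)$, which is precisely the claim (taking $\phi_{L,M}(c_i)=L\bigl(\begin{smallmatrix}M&\\&1\end{smallmatrix}\bigr)S_{c_i}\bigl(\begin{smallmatrix}M&\\&1\end{smallmatrix}\bigr)$).

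The only non-mechanical ingredient is the density argument that produces representatives with support disjoint from $N_2$; everything else is a careful re-reading of the manipulations of Theorem \ref{thm:the_relation}, applied pointwise rather than after integration against a character $\Lambda$ of $\Cl_d(MN_1)$. The mild subtlety to watch is keeping track of the difference between $H_\infty$ and the element $h_{L,M}$ that arises in the second application of Lemma \ref{lem:auxiliary}; the two differ only by the central scalar $LM$, which is harmless because $\pi$ has trivial central character.
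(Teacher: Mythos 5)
Your argument is correct and follows essentially the same route as the paper: both re-use the chain of identities from the proof of Theorem \ref{thm:the_relation} to express $a(F,\phi_{L,M}(c))$ in terms of $\Phi_S(t_c t_\infty H(L,M))$ (the paper keeps the integral over $t_\infty$ but the integrand is constant, so this is the same as your pointwise version), then choose representatives $t_c$ with trivial components at all $p\mid N_2$, and finally apply Lemma \ref{T_invariance} to strip off $t_c$. The only immaterial difference is the justification for the existence of such representatives: you invoke Chebotarev density for the ray class group, while the paper uses a direct approximation of the components at $p\mid N_2$ by an element of $T(\Q)$.
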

\begin{proof}
Let $\{ t_c\}_c$ be a set of representatives of $\Cl_d(MN_1)$. We may choose $t_c$ so that $t_{c,p}=1_2$ for all $p|N_2$. Indeed, if $\tilde{t}\in T(\Q)$ is such that $\tilde t_p=t_{c,p}$ for all $p|N_2$, then $t_c=\tilde{t}\prod_{p|N_2} 1_2\prod_{p\nmid N_2} \tilde{t}_p^{-1}t_{c,p}$.
From the proof of Theorem \ref{thm:the_relation}, and using its notation, we get 
$$a(F,L\(\begin{smallmatrix} M &  \\  & 1\\\end{smallmatrix}\)S_c\(\begin{smallmatrix} M &  \\  & 1\\\end{smallmatrix}\) )= (LM)^k e^{2\pi\tr S} {1\over r}\int_{\R\ti\back T(\R )}\Phi_{\phi_{L,M}(c)}(H_{\infty}^{-1}m_c t_{\infty})dt_{\infty}\, ,$$
where
$$\Phi_{\phi_{L,M}(c)}(H_{\infty}^{-1}m_c t_{\infty})=\Phi_S(t_ct_{\infty}H(L,M))\stackrel{\rm{Lemma}\, \ref{T_invariance}}{=} \Phi_S(t_{\infty}H(L,M))$$
does not depend on $c$.
\end{proof}

Hence, it makes sense to write $a(F;dM^2,L)$ for any Fourier coefficient of $F$ that is of the form $a(F,L\(\begin{smallmatrix} M &  \\  & 1\\\end{smallmatrix}\)S_c\(\begin{smallmatrix} M &  \\  & 1\\\end{smallmatrix}\) )$ for some $c\in\Cl_d(MN_1)$, or in another words, for $a(F,T)$ with $T\in H_1(dM^2,L;\G^0(N_1))$.

The following theorem generalises \cite[Theorem 5.1]{PSSc1} to cuspidal Siegel modular forms of level $\G_0(N_1,N_2)$ with $N_2>1$.

\begin{thm}\label{thm:Maass_relation}
Let $F$ be as above. For any fundamental discriminant $d$ and any positive integers $L,M$, Fourier coefficients of $F$ satisfy the following Maass relations:
\begin{equation}\label{eq:Maass_relation}
a(F;dM^2,L)=\sum_{\substack{r|L\\\gcd(r,N_2)=1}} r^{k-1} a(F;d\(\frac{LM}{rL_{\S}}\)^2,L_{\S}),
\end{equation}
where $L_{\S}=(L, N_2^{\infty})$.

Hence, if 
\begin{equation}\label{special_coeff}
\mat{a}{b/2}{b/2}{c} =L\mat{M}{}{}{1} S_{\tilde{c}}\mat{M}{}{}{1}
\end{equation}
for some $\tilde{c}\in\Cl_d(MN_1)$, then
$$a(F,\mat{a}{b/2}{b/2}{c}) =\sum_{\substack{r|\gcd(a,b,c)\\\gcd(r,N_2)=1}} r^{k-1} a(F,L_{\S}\mat{ac\over (rL_{\S})^2}{b\over 2rL_{\S}}{b\over 2rL_{\S}}{1})\, ,$$
where $L_{\S}=(\gcd(a,b,c), N_2^{\infty})$.
\end{thm}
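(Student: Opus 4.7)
The proof proceeds in three stages: reduce the concluding ``Hence'' assertion to the main identity \eqref{eq:Maass_relation} via the lemma preceding the theorem; apply Corollary \ref{cor:the_relation_easy} with the trivial character to reduce \eqref{eq:Maass_relation} to a product of local Bessel identities; and verify those local identities using Sugano's formula for spherical type IIb representations.

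For the first stage, if $T = L\,\mathrm{diag}(M,1)\,S_{\tilde c}\,\mathrm{diag}(M,1)$ with $\tilde c \in \Cl_d(MN_1)$, then $T \in H_1(dM^2, L; \G^0(N_1))$ by construction, and the lemma preceding the theorem gives $a(F, T) = a(F; dM^2, L)$. A direct matrix computation identifies $L_\S \(\begin{smallmatrix} ac/(rL_\S)^2 & b/(2rL_\S) \\ b/(2rL_\S) & 1\end{smallmatrix}\)$ with a matrix of the form $L_\S\,\mathrm{diag}(LM/(rL_\S), 1)\,S_{c'}\,\mathrm{diag}(LM/(rL_\S), 1)$ for some $c'$ (up to $\G^0(N_1)$-equivalence), so the concluding assertion is a direct translation of \eqref{eq:Maass_relation}.

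For the main identity, take $\Lambda = 1$ in Corollary \ref{cor:the_relation_easy}. Table \ref{table2} applied to the spherical type IIb representation $\pi_p = \chi_p\,\triv_{\GL_2}\rtimes\chi_p^{-1}$ (for which $\sigma = \chi^{-1}$ and $\chi\sigma = 1$) shows the local $(\Lambda_p, \theta_p)$-Bessel model exists precisely for $\Lambda_p = 1$, and globally $C(1) = 1$ divides every modulus. The lemma preceding the theorem collapses the sums in \eqref{eq:the_relation_simple} to $|\Cl_d(\cdot)|$ times a single Fourier coefficient. Choosing $L' = L_\S$ and $M' = M_\S := (M, N_2^\infty)$ makes the Bessel product on the LHS of \eqref{eq:the_relation_simple} empty (every prime of $L_\S M_\S$ divides $N_2$), yielding the master formula
\begin{equation*}
a(F; dM^2, L) = \(\frac{LM}{L_\S M_\S}\)^{k} a(F; dM_\S^2, L_\S) \prod_{\substack{p\mid LM\\ p \nmid N_2}} B_{\phi_p}(h_p(l, m)).
\end{equation*}
Applied with $(L, M)$ replaced by $(L_\S, LM/(rL_\S))$ (whose $N_2$-parts remain $L_\S, M_\S$ because $r\mid L/L_\S$), the same formula expresses each summand $a(F; d(LM/(rL_\S))^2, L_\S)$ on the right of \eqref{eq:Maass_relation} in terms of $a(F; dM_\S^2, L_\S)$ times an analogous Bessel product.

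After cancelling the common factor $(LM/(L_\S M_\S))^k\, a(F; dM_\S^2, L_\S)$ and re-indexing $r = \prod_p p^{r_p}$ with $0 \le r_p \le \ord_p(L)$, the resulting equation factors over primes $p\nmid N_2$, reducing \eqref{eq:Maass_relation} to the purely local identity
\begin{equation*}
B_{\phi_p}(h_p(l, m)) = \sum_{j=0}^{l} p^{-j}\, B_{\phi_p}(h_p(0,\, l+m-j)), \qquad l, m \ge 0,
\end{equation*}
for the Bessel vector of $\pi_p$. The main obstacle is precisely this local calculation: substituting the Satake parameters of $\chi_p\,\triv_{\GL_2}\rtimes\chi_p^{-1}$ into Sugano's explicit rational generating function (Theorem \ref{thm:Sugano}) and extracting the identity above, which is essentially the content of \cite[Theorem 2.1]{PSSc1}. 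Once the local identity is established, the global Maass relation follows by the factorisation above, with the bookkeeping of the prefactors $(LM/L_\S M_\S)^k$ as they distribute over primes being routine.
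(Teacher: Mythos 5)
Your proposal is correct and follows essentially the same route as the paper: take $\Lambda$ trivial in Corollary \ref{cor:the_relation_easy} with $L'=L_{\S}$, $M'=M_{\S}$, express both $a(F;dM^2,L)$ and each summand $a(F;d(LM/(rL_{\S}))^2,L_{\S})$ in terms of $a(F;dM_{\S}^2,L_{\S})$ times products of local spherical Bessel values, and conclude via the type IIb identity $B_p(h(l,m))=\sum_{i=0}^{l}p^{-i}B_p(h(0,l+m-i))$ from \cite[Theorem 2.1]{PSSc1}. The only cosmetic difference is that you sketch re-deriving that local identity from Sugano's generating function, whereas the paper simply cites it.
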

\begin{proof}
Recall Corollary \ref{cor:the_relation_easy}. If we take $\Lambda$ to be a trivial character\footnote{Since at $p\nmid N_2$, $\pi_p=\chi_p\triv_{\GL(2)}\rtimes\chi_p^{-1}$ is a representation of type IIb, so according to Table \ref{table2} a local $(\Lambda_p, \theta_p)$-Bessel model exists if and only if $\Lambda_p =\triv$.}
in the formula \eqref{eq:the_relation_simple}, and $M'=(M,N_2^{\infty})$, $L'=(L,N_2^{\infty})$, then
$$a(F;dM^2,L)=\(\frac{LM}{L'M'}\)^k a(F;dM'^2,L')\prod_{\substack{p\mid LM\\ p\nmid N_2}} B_{\phi_p}(h_p(l_p, m_p))\, ,$$
where $l_p=\ord_p L$, $m_p=\ord_p M$. Similarly, for any divisor $r$ of $L$,
$$a(F;d\(\frac{LM}{rL'}\)^2\hsp{0.2},L')=\hsp{0.1}\(\frac{LM}{rL'M'}\)^k\hsp{0.1} a(F;dM'^2,L')\hsp{0.1}\prod_{\substack{p\mid LM/r\\ p\nmid N_2}}\hsp{0.2} B_{\phi_p}(h_p(0, l_p+m_p-r_p))\, .$$
Note that the last product can actually be taken over all primes $p|LM$ that do not divide $N_2$. Indeed, in the product over $p|LM/r, p\nmid N_2$ we miss only those places $p$ for which both $r_p=l_p$ and $m_p=0$. But in this case $B_{\phi_p}(h_p(0, l_p+m_p-r_p))=B_{\phi_p}(h_p(0,0))=1$ by Theorem \ref{thm:Sugano}.

Moreover, it is known \cite[Theorem 2.1]{PSSc1} that the spherical vectors of the representations of type IIb satisfy the equation
$$B_p(h(l,m))=\sum_{i=0}^l p^{-i}B_p(h(0,l+m-i))$$
for all $l,m\geq 0$.
Hence, the equation \eqref{eq:Maass_relation} holds if and only if
$$\prod_{\substack{p\mid LM\\ p\nmid N_2}}\sum_{i=0}^{l_p} p^{-i}B_{\phi_p}(h_p(0, l_p+m_p-i))=\sum_{\substack{r|L\\ \gcd(r,N_2)=1}}{1\over r}\prod_{\substack{p\mid LM\\ p\nmid N_2}} B_{\phi_p}(h_p(0, l_p+m_p-r_p))\, ,$$
which is true.
\end{proof}

It is unfortunate that our method gives us an access only to the coefficients of the form \eqref{special_coeff}, which are not yet fully characterised. Nevertheless, their partial study in section \ref{sec:ray_class_groups} with subsequent Corollary \ref{cor:H_1=H} already leads to a satisfactory result which generalises the full level case:

\begin{cor}
Let $F$, $N_1$, $N_2$ be as above. Then for any matrix $T=L\(\begin{smallmatrix} a & b/2 \\ b/2 & c\end{smallmatrix}\)$ for which $L|N_2^{\infty}$, $\gcd(a,b,c,N_2)=1$ and $\(\frac{b^2-4ac}{p}\) =-1$ for every $p|N_1$, we have 
$$a(F, L\mat{a}{b/2}{b/2}{c}) = \sum_{r|\gcd(a,b,c)} r^{k-1}  a(F, L\mat{{ac\over r^2}}{{b\over 2r}}{{b\over 2r}}{1}).$$
\end{cor}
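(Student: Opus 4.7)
The plan is to derive the Corollary as a direct specialisation of the second assertion of Theorem \ref{thm:Maass_relation}, once I verify that the matrix $T$ at hand actually lies in the subset $H_1(d\tilde M^2, L\tilde L; \G^0(N_1))$ of Fourier-coefficient classes to which that theorem applies. The enabling input is Corollary \ref{cor:H_1=H}, which characterises when $H_1=H$ purely in terms of a Legendre symbol condition on the primes dividing $N_1$.

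Concretely, I would first set $\tilde L:=\gcd(a,b,c)$ and factor $b^2-4ac=d\tilde M^2\tilde L^2$ with $d$ the (negative) fundamental discriminant and $\tilde M\in\N$; then $T=L\begin{pmatrix}a&b/2\\b/2&c\end{pmatrix}$ has content $L\tilde L$ and discriminant $d\tilde M^2(L\tilde L)^2$, so that $T\in H(d\tilde M^2, L\tilde L;\G^0(N_1))$. The hypothesis $\left(\tfrac{b^2-4ac}{p}\right)=-1$ for every $p\mid N_1$ forces $p\nmid\tilde M\tilde L$ and $\left(\tfrac{d}{p}\right)=-1$, hence $\left(\tfrac{d\tilde M^2}{p}\right)=-1$ for each such $p$. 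Corollary \ref{cor:H_1=H} then gives $T\in H_1(d\tilde M^2, L\tilde L;\G^0(N_1))$, i.e.\ there exists $\tilde c\in\Cl_d(\tilde M N_1)$ with $T$ equivalent (under $\G^0(N_1)$) to $(L\tilde L)\begin{pmatrix}\tilde M&\\&1\end{pmatrix}S_{\tilde c}\begin{pmatrix}\tilde M&\\&1\end{pmatrix}$, which is precisely the shape required to invoke the second statement of Theorem \ref{thm:Maass_relation}.

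I would then read off the relation by substituting the entries $(La,Lb,Lc)$ into that statement. Since $L\mid N_2^{\infty}$, the condition $r\mid\gcd(La,Lb,Lc)=L\tilde L$ together with $\gcd(r,N_2)=1$ collapses to $r\mid\tilde L$ with $\gcd(r,N_2)=1$, which is exactly the index set appearing in the Corollary. A routine rewriting of the right-hand side matrix (using $L\mid N_2^{\infty}$, so that no common prime factors of $\tilde L$ and $N_2$ enter the admissible $r$'s) identifies the summand with $L\begin{pmatrix}ac/r^2&b/(2r)\\b/(2r)&1\end{pmatrix}$, yielding the Corollary.

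The only genuinely nontrivial step is the appeal to Corollary \ref{cor:H_1=H}: without it one cannot guarantee that the particular $T$ we are handed lies in the image of $\tilde\phi_{L\tilde L,\tilde M}$, since in general $H_1$ is a proper subset of $H$. Once that point is secured by the Legendre-symbol hypothesis, what remains is careful bookkeeping with contents, discriminants, and the scalar $L$.
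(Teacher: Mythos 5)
Your argument is correct and is precisely the route the paper intends: the Legendre-symbol hypothesis on $b^2-4ac=d\tilde M^2\tilde L^2$ feeds into Corollary \ref{cor:H_1=H} to place $T$ in $H_1(d\tilde M^2,L\tilde L;\G^0(N_1))$, i.e.\ in the image of $\tilde\phi_{L\tilde L,\tilde M}$, after which the second statement of Theorem \ref{thm:Maass_relation} applies and the bookkeeping with $L\mid N_2^{\infty}$ yields the stated formula. The only point worth making explicit is that identifying the summand with $a\bigl(F,L\left(\begin{smallmatrix}ac/r^2 & b/2r\\ b/2r & 1\end{smallmatrix}\right)\bigr)$ uses $L_{\S}=L$, i.e.\ that $\gcd(a,b,c)$ is coprime to $N_2$ --- an assumption left implicit in the corollary here but stated explicitly ($\gcd(a,b,c,N_2)=1$) in the theorem of the introduction.
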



\end{document}